\newcommand*{\mailto}[1]{\href{mailto:#1}{\nolinkurl{#1}}}
\newcommand{\arxiv}[1]{\href{http://arxiv.org/abs/#1}{arXiv:#1}}
\newcommand{\msc}[1]{\href{http://www.ams.org/msc/msc2020.html?t=&s=#1}{#1}}
\newcommand{\ack}{\section*{Acknowledgments}}
\newtheorem{theorem}{Theorem}[section]
\newtheorem{corollary}[theorem]{Corollary}
\newtheorem{lemma}[theorem]{Lemma}
\newtheorem{hypothesis}{Hypothesis}[section]
\theoremstyle{definition}
\newtheorem{definition}[theorem]{Definition}
\newtheorem{example}[theorem]{Example}
\newtheorem{remark}[theorem]{Remark}
\newcommand{\wh}{\widehat}
\newcommand{\id}{{\mathbbm 1}}
\newcommand{\intrL}{\eta}
\numberwithin{equation}{section}
\DeclareMathOperator{\dom}{dom}
\DeclareMathOperator{\loc}{loc}
\def\mG{\mathsf{G}} 
\newcommand\R{{\mathbb{R}}}
\newcommand\C{{\mathbb{C}}}
\newcommand\Z{{\mathbb{Z}}}
\newcommand{\gq}{\mathfrak{q}}
\newcommand{\gQ}{\mathfrak{Q}}
\newcommand\Ei{e_{\imath}}
\newcommand\Et{e_{\tau}}
\newcommand\Deg{{\rm{Deg}}}
\newcommand\cA{{\mathcal{A}}}
\newcommand\cC{{\mathcal{C}}}
\newcommand\cD{{\mathcal{D}}}
\newcommand\cI{{\mathcal{I}}}
\newcommand\cG{{\mathcal{G}}}
\newcommand\cE{{\mathcal{E}}}
\newcommand\cM{{\mathcal{M}}}
\newcommand\cP{{\mathcal{P}}}
\newcommand\cV{{\mathcal{V}}}
\newcommand\bH{{\mathbf{H}}}
\newcommand\rh{{\mathbf{h}}}
\newcommand\rH{{\rm{H}}}
\newcommand\rI{{\rm{I}}}
\newcommand\E{{\rm{e}}}
\newcommand\vol{{\rm{vol}}}
\newcommand\I{{\rm{i}}}
\newcommand\rD{{\rm{d}}}
\newcommand\f{{\bf{f}}}
\begin{document}

\title[Laplacians on Infinite Graphs]{Laplacians on Infinite Graphs:\\ discrete vs continuous}

\author[A. Kostenko]{Aleksey Kostenko}
\address{Faculty of Mathematics and Physics\\ University of Ljubljana\\ Jadranska ul.\ 19\\ 1000 Ljubljana\\ Slovenia\\ and 
Faculty of Mathematics\\ University of Vienna\\
Oskar-Morgenstern-Platz 1\\ 1090 Vienna\\ Austria}
\email{\mailto{Aleksey.Kostenko@fmf.uni-lj.si}}

\author[N. Nicolussi]{Noema Nicolussi}
\address{Faculty of Mathematics\\ University of Vienna\\ 
Oskar-Morgenstern-Platz 1\\ 1090 Vienna\\ Austria}
\email{\mailto{noema.nicolussi@univie.ac.at}}

\thanks{{\it Research supported by the Austrian Science Fund (FWF) 
under Grants No.~P~28807, I~4600~(A.K.) and J4497~(N.N.), and by the Slovenian Research Agency (ARRS) under Grant No.~N1-0137~(A.K.)}}
\thanks{This is an extended version of the invited lecture of one of us (A.K.) at the 8th European Congress of Mathematics in Portoro\v{z} on 22 June 2021; the recording is available at this link \url{https://www.youtube.com/watch?v=NnlHVpee-CE}}

\keywords{Graph, metric graph, Laplacian, intrinsic metric, random walks}
\subjclass[2010]{Primary \msc{34B45}; Secondary \msc{47B25}; \msc{81Q10}}

\begin{abstract}
There are two main notions of a Laplacian operator associated with
graphs: discrete graph Laplacians and continuous Laplacians on metric graphs
(widely known as quantum graphs). Both objects have a venerable history as
they are related to several diverse branches of mathematics and mathematical
physics. 
The existing literature usually treats these two Laplacian operators separately. In this overview, we will focus on the relationship between them (spectral and parabolic properties). Our main conceptual message
is that these two settings should be regarded as complementary (rather than
opposite) and exactly their interplay leads to important further insight on
both sides.
\end{abstract}

\maketitle


\section{Introduction}

Laplacian operators on graphs have a long history and enjoy deep connections to several branches of mathematics and mathematical physics. There are two different notions of Laplacians appearing in this context: the key features of (continuous) {\em Laplacians on metric graphs}, which are also known as {\em quantum graphs}, include their use as simplified models of complicated quantum systems (see, e.g., \cite{bk13},
\cite{ekkst08}, \cite{exko}, \cite{post}) and the appearance of metric graphs in tropical and algebraic geometry, where they serve as non-Archimedean analogues of Riemann surfaces (see, e.g., \cite{baru10}, \cite{drva}).
The subject of {\em discrete Laplacians on graphs} is even wider and a partial overview of the immense literature can be found in \cite{bar}, \cite{chung}, \cite{cdv}, \cite{klwBook}, \cite{woe}.

The study of both types of graph Laplacians is heavily influenced by the corresponding investigations in the manifold setting (e.g., spectral geometry of manifolds). In fact, one can also put Laplacians on manifolds, metric and discrete graphs under the overarching umbrella of {\em Dirichlet forms}, which provides the systematic framework for studying heat and diffusion processes. From this perspective, metric graph Laplacians have much in common with Laplacians on manifolds since both can be treated in the framework of strongly local Dirichlet forms. Moreover, the notion of an intrinsic metric, first mentioned by E.~B.~Davies and later emphasized by M.~Biroli, U.~Mosco, and K.-T.~Sturm (see, e.g., \cite{stu}), allows to directly transfer many important results from manifolds to the abstract setting of strongly local Dirichlet forms (and hence metric graph Laplacians).

In contrast to this, discrete graph Laplacians are difference operators and hence provide examples of non-local operators (e.g., no Leibniz rule). In particular, difficulties in analyzing random walks on graphs often stem from exactly this fact. On the other hand, this area has seen a tremendous progress in the last decade. In our opinion, the successful introduction and systematic use of the notion of {\em intrinsic metrics on graphs} played (and continues to play!) a major role in this breakthrough (see the very recent monograph \cite{klwBook}).

Despite a vast interest in both types of graph Laplacians, the existing literature usually treats them separately. 
In the present overview, we mainly focus on the relationship between them and survey connections on different levels (spectral and parabolic properties). This leads to a systematic way of connecting the settings and several applications. Our main conceptual message is that discrete and continuous graph Laplacians should be regarded as complementary (rather than opposite) and exactly their interplay leads to important further insight on both sides. This relationship can also be formulated in the language of intrinsic metrics. Indeed, a large class of intrinsic metrics on discrete graphs is obtained as restrictions to vertices of intrinsic metrics on (weighted) metric graphs. In particular, from this perspective metric graphs indeed serve as bridge between graphs and manifolds, a heuristic principle which is often mentioned in context with graph Laplacians.
Let us also mention that the stochastic side of these connections, namely the approach of using Brownian motion on metric graphs to study random walks on discrete graphs, has been employed several times in the literature \cite{baba03}, \cite{fo13}, \cite{fo}, \cite{hua}, \cite{hush14}, \cite{var85b} (see also references therein).

Most of the results presented here are carefully explained in the recent monograph \cite{kn21}, which also contains many other results not mentioned in this text.

\noindent
\ack Some of the results reviewed here were obtained in collaboration with Pavel Exner, Mark Malamud, and Delio Mugnolo and it is our great pleasure to thank them. We are also grateful to Omid Amini, Matthias Keller and Wolfgang Woess for numerous useful and stimulating discussions.

\section{Preliminaries} \label{sec:prelim}

\subsection{Graphs}\label{ss:II.01}

Let us first recall basic notions (we mainly follow the terminology in \cite{die}). Let $\cG_d = (\cV,\cE)$ be an undirected {\em graph}, that is, $\cV$ is a finite or countably infinite set of vertices and $\cE$ is a finite or countably infinite set of edges. 
Two vertices $u$, $v\in \mathcal{V}$ are called {\em neighbors} and we shall write $u\sim v$ if there is an edge $e_{u,v}\in \mathcal{E}$ connecting $u$ and $v$. For every $v\in \mathcal{V}$, we define $\cE_v$ as the set of edges incident to $v$. We stress that we allow {\em multigraphs}, that is, we allow {\em multiple edges} (two vertices can be joined by several edges) and {\em loops} (edges from one vertex to itself). Graphs without loops and multiple edges are called {\em simple}. 

\begin{example}[Cayley graphs]\label{ex:Cayley}
Let $\mG$ be a finitely generated group and let $S$ be a generating set of $\mG$. We shall always assume that 
\begin{itemize}
\item $S$ is symmetric, $S=S^{-1}$ and finite, $\#S<\infty$,
\item the identity element of $\mG$ does not belong to $S$ (this excludes loops).
\end{itemize} 
The {\em Cayley graph} $\cG_C = \cC(\mG,S)$ of $\mG$ with respect to $S$ is the simple graph whose vertex set coincides with $\mG$ and two vertices $x,y\in\cG$ are neighbors if and only if $xy^{-1}\in S$. 
\end{example}

Sometimes it is convenient to assign an {\em orientation} on $\cG_d$: to each edge $e\in\cE$ one assigns the pair $(\Ei,\Et)$ of its {\em initial} $\Ei$ and {\em terminal} $\Et$ vertices. We shall denote the corresponding oriented graph by $\vec{\cG}_d = (\cV,\vec{\cE})$, where $\vec{\cE}$ denotes the set of oriented edges. Notice that for an oriented loop we do distinguish between its initial and terminal vertices. Next, for every vertex $v\in\cV$, set 
\begin{align}\label{eq:Ev_pm}
{\cE}^+_v & = \big\{(\Ei,\Et) \in \vec{\cE}\,|\, \Ei = v\big\}, & {\cE}_v^- & = \big\{(\Ei,\Et) \in \vec{\cE} \,|\,  \Et = v\big\},
\end{align}
and let $\vec{\cE}_v$ be the disjoint union of outgoing $\cE_v^+$ and incoming $\cE_v^-$ edges,
\begin{align}\label{eq:vecEv}
\vec{\cE}_v & := {\cE}_v^+ \sqcup {\cE}_v^- = \vec{\cE}_v^+ \cup \vec{\cE}_v^-, &  \vec{\cE}_v^\pm & := \big\{(\pm,e)\,|\, e\in \cE_v^\pm\big\}.
\end{align}
The {\em (combinatorial) degree} of $v\in\cV$ is 
\begin{align}\label{eq:combdeg}
\deg(v):=  \#(\vec{\cE}_v ) = \#(\vec{\cE}_v^+ ) + \#(\vec{\cE}_v^- )  = \#(\cE_v ) + \#\{e\in\cE_v|\, e\ \text{is a loop}\}.
\end{align}
Notice that if $\cE_v$ has no loops, then $\deg(v) = \#(\cE_v)$. The graph $\cG_d$ is called {\em locally finite}  if $\deg(v)<\infty$ for all $v\in\cV$. 

 A sequence of (unoriented) edges $\cP = (e_{v_0, v_1}, e_{v_1, v_2}, \dots, e_{v_{n-1}, v_n})$, where $e_{v_i, v_{i+1}}$ connects the vertices $v_i$ and $v_{i+1}$, is called a {\em path} of (combinatorial) length $n\in \Z_{\ge 0}\cup \{\infty\}$.  Notice that for simple graphs each path $\cP$ can be identified with its sequence of vertices $\cP = (v_k)_{k=0}^n$.
 A graph $\cG_d$ is called {\em connected} if for any  two vertices there is a path connecting them. 
 
We shall always make the following assumptions on the geometry of $\cG_d$:

\begin{hypothesis}\label{hyp:graph01}
$\cG_d$ is connected and locally finite. 
\end{hypothesis}

\begin{remark}
We assume connectivity for convenience reasons only (one can always consider each connected component of a graph separately). However, the assumption that a graph is locally finite is indeed important in our considerations. 
\end{remark}

\subsection{Metric graphs}\label{ss:II.02}

Assigning each edge $e\in\cE$ a finite length $|e| \in (0,\infty)$, we can naturally associate with $(\cG_d,|\cdot|) = (\cV,\cE,|\cdot|)$ a metric space $\cG$: first, we identify each edge $e \in \cE$ with the copy of the interval $\cI_e = [0, |e|]$, which also assigns an orientation on $\cE$ upon identification of $\Ei$ and $\Et$ with the left, respectively, right endpoint of $\cI_e$. The topological space $\cG$ is then obtained by ``glueing together" the ends of edges corresponding to the same vertex $v$ (in the sense of a topological quotient, see, e.g., \cite[Chap.~3.2.2]{bbi}). 
The topology on $\cG$ is metrizable by the {\em length metric} $\varrho_0$ --- the distance between two points $x,y \in\cG$ is defined as the arc length of the ``shortest path" connecting them (such a path does not necessarily exist and one needs to take the infimum over all paths connecting $x$ and $y$). 

A \emph{metric graph} is a (locally compact) metric space $\cG$ arising from the above construction for some collection $(\cG_d, |\cdot|) =(\cV, \cE, |\cdot|)$. More specifically, $\cG$ is then called the \emph{metric realization} of $(\cG_d, |\cdot|)$, and a pair $(\cG_d, |\cdot|)$ whose metric realization coincides with $\cG$ is called a \emph{model} of $\cG$. For a thorough discussion of metric graphs as topological and metric spaces we refer to \cite[Chap.~I]{hae}. 

\begin{remark}\label{rem:II.mr=lengthspace}
Let us stress that a metric graph $\cG$ equipped with the length metric $\varrho_0$ (or with any other path metric) is a {\em length space} (see \cite[Chap.~2.1]{bbi} for definitions and further details). Notice also that complete, locally compact length spaces are {\em geodesic}, that is, every two points can be connected by a shortest path.
\end{remark}

Clearly, different models may give rise to the same metric graph. Moreover,  any metric graph has infinitely many models (e.g., they can be constructed by subdividing edges using vertices of degree two). 
A model $(\cV,\cE, |\cdot|)$ is called {\em simple} if the corresponding graph $(\cV,\cE)$ is simple. In particular, every locally finite metric graph has a simple model and this indicates that restricting to simple graphs, that is, assuming in addition to Hypothesis~\ref{hyp:graph01} that $\cG_d$ has no loops or multiple edges, would not be a restriction at all when dealing with metric graphs. 

\begin{remark}\label{rem:Models}
In most parts of our paper, we will consider a metric graph together with a fixed choice of its model. In this situation, we will usually be slightly imprecise and do not distinguish between these two objects. In particular, we will denote both objects by the same letter $\cG$ and write $\cG = (\cV, \cE, |\cdot|)$ or $\cG= (\cG_d,|\cdot|)$. 
\end{remark}

\begin{remark}[Metric graph as a 1d manifold with singularities]\label{rem:MGasM1}
Sometimes it is useful to consider metric graphs as one-dimensional manifolds with singularities.
Since every point $x\in\cG$ has a neighborhood isomorphic to a star shaped set
\begin{align} \label{eq:star}
\cE(\deg(x),r_x) := \big\{z= r\E^{2\pi \I k/\deg(x)}|\, r\in [0,r_x),\ k=1,\dots,\deg(x) \big\}\subset \C,
\end{align}
 one may introduce the set of {\em tangential directions} $T_x(\cG) $ at $x$ as the set of unit vectors $\E^{2\pi \I k/\deg(x)}$, $k=1,\dots, \deg(x)$. 
Then all vertices $v\in\cV$ with $\deg(v)\ge 3$ are considered as {\em branching points/singularities}
and vertices $v\in\cV$ with $\deg(v)=1$ as {\em boundary points}. 
Notice that for every vertex $v\in\cV$ the set of tangential directions $T_v(\cG)$ can be identified with $\vec{\cE}_v$. 
 If there are no loop edges at the vertex $v \in \cV$, then $T_v(\cG)$ is 
identified with $\cE_v$ in this way. 
\end{remark}

\section{Graph Laplacians} \label{ss:III.01}

When speaking about graph Laplacians, usually one of the operators considered in the next two examples is meant.

\begin{example}[Combinatorial Laplacian]\label{ex:LaplComb} 
For a simple graph $\cG_d = (\cV,\cE)$ satisfying Hypothesis~\ref{hyp:graph01}, the so-called {\em combinatorial Laplacian} is defined on $C(\cV)$ by 
\begin{align}\label{eq:LaplComb}
(L_{\rm comb} f)(v) = \sum_{u\sim v} f(v) - f(u) = \deg(v) f(v) - \sum_{u\sim v} f(u),\qquad v\in \cV. 
\end{align}
Here $C(\cV)$ is the set of complex-valued functions on a countable set $\cV$. 
Notice that the second summand on the RHS,
\begin{align*} 
(\cA f)(v) =  \sum_{u\sim v} f(u),\qquad v\in \cV, 
\end{align*}
is nothing but the operator generated by the adjacency matrix of $\cG_d$, which explains the name of $L_{\rm comb}$. The combinatorial Laplacian plays a crucial role in many areas of mathematics, physics, and engineering. In particular, the relationship between the spectral properties of $L_{\rm comb}$ and various graph parameters is one of the core topics within the field of {\em Spectral Graph Theory} (see \cite{chung}, \cite{cdv} for further details).
\end{example} 

\begin{example}[Normalized Laplacian]\label{ex:LaplNorm} 
Assuming again that $\cG_d = (\cV,\cE)$ is a simple graph satisfying Hypothesis~\ref{hyp:graph01}, consider another operator defined by 
\begin{align}\label{eq:LaplNorm}
(L_{\rm norm} f)(v) = \frac{1}{\deg(v)}\sum_{u\sim v} f(v) - f(u) = f(v) - \frac{1}{\deg(v)}\sum_{u\sim v} f(u)
\end{align}
for every $v\in \cV$. 
The second summand on the RHS,
\begin{align*} 
(\cM f)(v) =  \frac{1}{\deg(v)}\sum_{u\sim v} f(u), 
\end{align*}
is the so-called {\em Markov (averaging) operator}. Notice that due to our assumptions on $\cG_d$, $\cM$ is a stochastic matrix known as the {\em transition matrix} for the simple random walk on the graph. 
The normalized Laplacian serves as a generator of a simple random walk on $\cG_d$ (see, e.g., \cite{bar}, \cite{woe}). 
\end{example} 

\begin{remark}
If the underlying graph $\cG_d$ is {\em regular} ($\deg$ is constant on $\cV$; for instance, Cayley graphs are regular), then $L_{\rm comb} = \deg \cdot L_{\rm norm} = \deg\cdot \rI - \cA$. However, in general these two Laplacians may have very different properties. For instance, $L_{\rm norm}$ generates a bounded operator in $\ell^2(\cV;\deg)$  and $L_{\rm comb}$ gives rise to a bounded operator in $\ell^2(\cV)$ only if $\cG_d$ has bounded geometry (see Remark~\ref{rem:bddLapl} below).
\end{remark}

The above two examples can be put into a much more general framework. Namely, let $\cV$ be a countable set. A function $m\colon \cV\to (0,\infty)$ defines a measure of full support on $\cV$ in an obvious way. A pair $(\cV,m)$ is called a {\em discrete measure space}. The set of square summable (w.r.t. $m$) functions 
\[
\ell^2(\cV;m) = \Big\{ f\in C(\cV)\,|\,\, \|f\|^2_{\ell^2(\cV;m)}:= \sum_{v\in\cV} |f(v)|^2 m(v) <\infty \Big\}
\]
has a natural Hilbert space structure. 

Suppose $b\colon \cV\times\cV \to [0,\infty)$ satisfies the following conditions:
\begin{itemize}
\item[(i)] {\em symmetry}: $b(u,v) = b(v,u)$ for each pair $(u,v)\in \cV\times\cV$,
\item[(ii)] {\em vanishing diagonal}: $b(v,v) = 0$ for all $v\in\cV$,
\item[(iii)] {\em locally finite}:   $\#\{ u\in\cV\,|\, b(u,v)\neq 0\} < \infty$  for all $v\in\cV$\footnote{In fact, using the form approach one can considerably relax this condition by replacing it  with the {\em local summability}: $\sum_{v\in\cV} b(u,v) <\infty$ for all $u\in\cV$.}.
\item[(iv)] {\em connected}: for any $u,v\in \cV$ there is a finite collection $(v_k)_{k=0}^n \subset \cV$ such that $u=v_0$, $v=v_n$ and $b(v_{k-1},v_k)>0$ for all $k\in \{1,\dots,n\}$.
\end{itemize}
Following \cite{kl12}, \cite{klwBook}, $b$ is called a {\em (weighted) graph} over $\cV$ or over $(\cV,m)$ if in addition  a measure $m$ of full support on $\cV$ is given ($b$ is also called an {\em edge weight}).  To simplify notation, we shall denote a graph $b$  over $(\cV,m)$ by $(\cV,m;b)$. 

\begin{remark}\label{rem:simplevsmult}
To any graph $b$ over $\cV$, we can naturally associate a simple combinatorial graph $\cG_b$. Namely, the vertex set of $\cG_b$ is $\cV$ and its edge set $\cE_b$ is defined by calling two vertices $u,v\in\cV$ neighbors, $u\sim v$, exactly when $b(u,v)>0$. Clearly, $\cG_b = (\cV,\cE_b)$ is an undirected graph in the sense of Section~\ref{ss:II.01}. Let us stress, however, that the constructed {\em graph $\cG_b$ is always simple}. 
\end{remark}

The {\em (formal) Laplacian} $L = L_{m,b}$ associated to a graph $b$ over $(\cV,m)$ is given by
\begin{align}\label{eq:LaplDiscr}
(L f)(v) = \frac{1}{m(v)}\sum_{u\in\cV} b(v,u) (f(v) - f(u)).
\end{align} 
It acts on functions $f\in C(\cV)$ and this naturally leads to the {\em maximal} Laplacian $\rh$ in $\ell^2(\cV;m)$ defined by 
\begin{align}\label{eq:LaplDiscrMax}
\rh & = L\upharpoonright\dom(\rh), & \dom(\rh) & = \{f\in \ell^2(\cV;m)\,|\,  Lf\in\ell^2(\cV;m)\}.
\end{align}
This operator is closed, however, if $\cV$ is infinite, it is not symmetric in general (cf. \cite[Theorem~6]{kl12}). 
Taking into account that $b$ is locally finite, it is straightforward to verify that $C_c(\cV)\subseteq \dom(\rh)$. Therefore, we can introduce the {\em minimal} Laplacian $\rh^0$ as the closure in $\ell^2(\cV;m)$ of the {\em pre-minimal} Laplacian
\begin{align}\label{eq:pmLaplDiscr}
\rh' = L\upharpoonright\dom(\rh'),\qquad \dom(\rh')= C_c(\cV).
\end{align}
Then $\rh'\subseteq \rh^0\subseteq \rh$ and $(\rh')^\ast = (\rh^0)^\ast  = \rh$.  If $\rh^0  = \rh$, then $\rh$ is {\em self-adjoint} as an operator in the Hilbert space $\ell^2(\cV; m)$ (and $\rh'$ is called {\em essentially self-adjoint}). The problem of self-adjointness is a classical topic, which is of central importance in quantum mechanics (see, e.g., \cite[Chap.~VIII.11]{RSI}). We shall return to this issue in Section~\ref{ss:applSA}. Let us now only mention that the self-adjointness takes place whenever $L = L_{m,b}$ gives rise to a bounded operator on $\ell^2(\cV;m)$.
It is rather well known (see, e.g., \cite[Lemma~1]{dav}, \cite[Theorem~11]{kl10}, \cite[Rem.~1]{susy}) that the Laplacian $L = L_{m,b}$ is bounded on $\ell^2(\cV;m)$ if and only if the weighted degree function
  $\Deg\colon \cV \to [0,\infty)$ given by 
\begin{align}\label{eq:WDegDef}
\Deg\colon v\mapsto \frac{1}{m(v)} \sum_{u\in \cV} b(u,v)
\end{align}
is bounded on $\cV$. In this case $\rh^0 = \rh$ and $\|\Deg\|_{\infty}\le \|\rh\|_{\ell^2(\cV;m)} \le 2\|\Deg\|_{\infty}$. 

\begin{remark}\label{rem:bddLapl}
For the combinatorial Laplacian $L_{\rm comb}$, we have $\Deg_{\rm comb}(v) = \deg(v)$ and hence $L_{\rm comb}$ is bounded exactly when $\cG_d$ has bounded geometry. For the normalized Laplacian $L_{\rm norm}$, $\Deg_{\rm norm}(v) \le 1$ for all $v\in\cV$ and hence $\|L_{\rm norm}\| \le 2$.
\end{remark}

There is another way to associate a self-adjoint operator with $L$ in $\ell^2(\cV;m)$. With each graph $b$ one can associate the {\em energy form} $\gq \colon C(\cV)\to [0,\infty]$ defined by 
\begin{align}\label{eq:gqLaplDiscr}
\gq [f] = \gq_{b}[f]:= \frac{1}{2}\sum_{u,v\in\cV} b(v,u) |f(v) - f(u)|^2.
\end{align} 
Functions $f\in C(\cV)$ such that $\gq[f] <\infty$ are called {\em finite 
energy functions}. Clearly\footnote{Actually, it suffices to assume that $b$ satisfies the local summability condition, see \cite{kl12},\cite{klwBook}.}, $C_c(\cV)$ belongs to the set $\cD(\gq)$ of finite energy functions and $\langle \rh f,f\rangle_{\ell^2(m)} = \gq[f]$ for all $f\in C_c(\cV)$. If $b$ is a graph over $(\cV,m)$, introduce the graph norm  
\begin{align}
\| f\|^2_{\gq} := \gq[f] + \|f\|^2_{\ell^2(\cV;m)}
\end{align}
for all $f\in \cD(\gq) \cap \ell^2(\cV;m)= : \dom(\gq)$. Clearly, $\dom(\gq)$ 
is the maximal domain of definition of the form $\gq$ in the Hilbert space $\ell^2(\cV;m)$; let us denote this form by $\gq_N$. Restricting further to compactly supported functions and then taking the graph norm closure, we get another form:
\[
\gq_D:= \gq\upharpoonright \dom(\gq_D), \qquad \dom(\gq_D) := \overline{C_c(\cV)}^{\|\cdot\|_{\gq}}.
\]
It turns out that both $\gq_D$ and $\gq_N$ are {\em Dirichlet forms} (for 
definitions see \cite{fuk10}). Moreover, $\gq_D$ is a  {\em regular Dirichlet form}. It turns out that the converse is also true: {\em ``every (irreducible) regular Dirichlet form over $(\cV,m)$ arises as the 
energy form $\gq_D$ for some (connected) graph $b$ over $(\cV,m)$"} (this claim is wrong as stated, however, to make it correct one needs to replace locally finite by the local summability condition on $b$ and also to allow killing terms, see \cite[Theorem~7]{kl12}). 

\begin{remark} 
The notion of {\em irreducibility} for Dirichlet forms on graphs is closely connected with the notion of {\em connectivity}. Recall that a graph $b$ is called {\em connected} if the corresponding graph $\cG_b$ is connected. Then the regular Dirichlet form $\gq_D$ is irreducible exactly when the underlying graph $b$ is connected (e.g., \cite[Chap.~1.4]{klwBook}).
\end{remark}

Now using the representation theorems for quadratic forms (see, e.g., \cite{kato}) one can associate in $\ell^2(\cV;m)$ the self-adjoint operators 
$\rh_D$ and $\rh_N$, the so-called {\em Dirichlet}\label{not:graphDirLapl} and {\em Neumann Laplacians}\label{not:graphNeuLapl} over $(\cV,m)$, with, respectively, $\gq_D$ and $\gq_N$. Usually, it is a rather nontrivial 
task to provide an explicit description of the operators $\rh_D$ and, especially, $\rh_N$\footnote{In fact, to decide whether $\rh_N$ and $\rh_D$ coincide for given $b$ and $m$, or equivalently that $\gq_N = \gq_D$, is already a highly nontrivial problem. This property is related to the uniqueness of a {\em Markovian extension}. For further details we refer to \cite{klwBook}, \cite{kmn19}, \cite[Chap.~7.2]{kn21}.}. However, the following abstract description always holds,
\begin{align}\label{eq:LaplDiscrDir}
\rh_D = \rh\upharpoonright\dom(\rh_{D}),\qquad \dom(\rh_D) = \dom(\rh)\cap \dom(\gq_D),
\end{align}
which also implies that $\rh_D$ is the {\em Friedrichs extension} of the adjoint $\rh^0 = \rh^\ast$ to $\rh$.

\section{Laplacians on metric graphs}\label{sec:LaplMetric}

\subsection{Function spaces on metric graphs}\label{ss:IV.01}  
Let $\cG$ be a metric graph together with a fixed model $(\cV,\cE,|\cdot|)$. Let also $\mu\colon \cE\to (0,\infty)$ be a weight function assigning 
a positive weight $\mu(e)$ to each edge $e\in\cE$. We shall assume that edge weights are orientation independent and we set 
$\mu(\vec{e}) = \mu(e)$ for all $\vec{e}\in \vec{\cE}_v$, $v\in\cV$.  
Identifying every edge $e\in\cE$ with a copy of $\cI_e = [0,|e|]$, we can introduce Lebesgue and Sobolev spaces on edges and also on $\cG$.  First of all, with the weight $\mu$ we associate the measure $\mu$ on $\cG$ defined as the edgewise scaled Lebesgue measure such that $\mu(\rD x) = 
\mu(e)\rD x_e$ on every edge $e\in\cE$. Thus, we can define the Hilbert space $L^2(\cG;\mu)$ of measurable functions $f\colon \cG\to \C$ which are 
square integrable w.r.t. the measure $\mu$ on $\cG$. Similarly, one defines the Banach spaces $L^p(\cG;\mu)$ for $p\in [1,\infty]$. In fact, if $p\in [1,\infty)$, then 
\[
L^p(\cG;\mu) \cong  \Big\{f=(f_e)_{e\in\cE}\big|\, f_e\in L^p(e;\mu),\ \sum_{e\in\cE}\|f_e\|^p_{L^p(e;\mu)}<\infty\Big\},
\]
where 
\[
\|f_e\|^p_{L^p(e;\mu)} = \int_{e}|f_e(x_e)|^p\mu(\rD x_e) =  \mu(e)\int_{e}|f_e(x_e)|^p\,\rD x_e.
\]
 If $\mu(e) = 1$, then we shall simply write $L^p(e)$. 
Next, the subspace of compactly supported $L^p$ functions will be denoted 
by $L^p_c(\cG;\mu)$. 
The space  $L^p_{\loc}(\cG;\mu)$  of locally $L^p$ functions consists of all measurable functions $f$ such that $fg\in L^p_c(\cG;\mu)$ for all $g\in C_c(\cG)$. Notice that both $L^p_{\loc}$ and $L^p_c$ are independent of the weight $\mu$.

For edgewise locally absolutely continuous functions on $\cG$, let us denote by $\nabla$ the edgewise first derivative,
\begin{align}\label{eq:nabla}
\nabla\colon f\mapsto f'.
\end{align}
Then for every edge $e\in\cE$, 
 \begin{align*}
H^1(e) & = \{f\in AC(e)\,|\, \nabla f\in L^2(e)\}, & H^2(e) & = \{f\in H^1(e)\,|\,  \nabla f\in H^1(e)\},
 \end{align*}
are the usual Sobolev spaces (upon the identification of $e$ with $\cI_e = [0,|e|]$), and $AC(e)$ is the space of absolutely continuous functions on $e$. 
Let us denote by $H^1_{\loc}(\cG\setminus\cV)$ and $H^2_{\loc}(\cG\setminus\cV)$ the spaces of measurable functions $f$ on $\cG$ such that their  edgewise restrictions belong to $H^1$, respectively, $H^2$, that is, 
\begin{align*}
H^j_{\loc}(\cG\setminus\cV) = \{f\in L^2_{\loc}(\cG)\, |\, f|_e\in H^j(e)\ \text{for all}\ e\in\cE\}
\end{align*}
for $j\in\{1,2\}$. Clearly, for each measurable $f \in H^2_{\loc}(\cG\setminus\cV)$ the following quantities
 \begin{align}\label{eq:tr_fe}
 f(\Ei) & := \lim_{x_e \to \Ei} f (x_e), & f (\Et) & := \lim_{x_e \to \Et} f (x_e),
 \end{align}
 and the normal derivatives 
 \begin{align}\label{eq:tr_fe'}
 \partial f (\Ei) & := \lim_{x_e \to \Ei} \frac{f (x_e) - f(\Ei)}{|x_e - \Ei|}, & 
 \partial f (\Et) & := \lim_{x_e \to \Et} \frac{f (x_e) - f(\Et)}{|x_e - \Et|},
 \end{align}
are well defined for all $e\in\cE$. We also need the following notation
\begin{align}\label{eq:DOMtr_fe}
f_{\vec{e}}(v) & := \begin{cases} f(\Ei), & \vec{e}\in \vec{\cE}_v^+, \\ f(\Et), & \vec{e} \in \vec{\cE}_v^-, \end{cases}
& \partial_{\vec{e}} f(v) & := \begin{cases} \partial f (\Ei), & \vec{e}\in \vec{\cE}_v^+, \\ \partial f (\Et), & \vec{e}\in \vec{\cE}_v^-, \end{cases}
\end{align}
for every $v\in\cV$ and $\vec{e}\in\vec{\cE}_v$. In the case of a loopless graph, the above notation simplifies since we can identify $\vec{\cE}_v$ with $\cE_v$ for all $v\in\cV$.  

\subsection{Kirchhoff Laplacians}\label{ss:IV.02}

Again, let $\cG$ be a metric graph together with a fixed model $(\cV,\cE,|\cdot|)$. 
Let  $\mu,\ \nu\colon \cE\to (0,\infty)$ be two edge weights on $\cG$ (for a given model). 
For every $e\in\cE$ consider the maximal operator $\rH_{e,\max}$ defined in $L^2(e;\mu)$ by
\begin{align}\label{eq:Hemax}
 \rH_{e,\max}f & = \tau_e f,\qquad \tau_e = -\frac{1}{\mu(x_e)}\frac{\rD}{\rD x_e}\nu(x_e)\frac{\rD}{\rD x_e}, \\
 \dom(\rH_{e,\max}) & = \big\{f\in L^2(e;\mu)\,|\, f,\ \nu f'\in AC(e),\ \tau_e f\in  L^2(e;\mu) \big\}.
\end{align}
Since $\mu$, $\nu$ are constant on $e$, $\dom(\rH_{e,\max})$ coincides with the Sobolev space $H^2(e)$. The maximal operator on $\cG$ is then defined in $L^2(\cG;\mu)$ as
\begin{align}\label{eq:Hmax}
\bH_{\max}  = \bigoplus_{e\in \cE} \rH_{e,\max}. 
\end{align}
Clearly, for each $f \in \dom(\bH_{\max})$ the quantities \eqref{eq:tr_fe}, \eqref{eq:tr_fe'}, and hence \eqref{eq:DOMtr_fe}
are well defined for all $e\in\cE$. Now, in order to reflect the underlying graph structure, we impose at each 
vertex $v\in\cV$ the {\em Kirchhoff boundary conditions} 
\begin{align}\label{eq:kirchhoff}
\begin{cases} f\ \text{is continuous at}\ v,\\[1mm] 
\sum\limits_{\vec{e}\in \vec{\cE}_v} \nu(e)\partial_{\vec{e}} f(v) = 0. \end{cases} 
\end{align} 

To motivate our definition, consider $\nabla$ as the differentiation operator on $\cG$ acting on functions which are edgewise locally absolutely continuous and also continuous at the vertices.
Notice that when considering $\nabla$ as an operator acting from $L^2(\cG;\mu)$ to $L^2(\cG;\nu)$, its formal adjoint $\nabla^\dagger$ acting from $L^2(\cG;\nu)$ to $L^2(\cG;\mu)$ acts edgewise as
\begin{align}
\nabla^\dagger \colon f\mapsto -\frac{1}{\mu} (\nu f)'.
\end{align}
Thus, the weighted Laplacian $\Delta$ acting in $L^2(\cG;\mu)$, written in the divergence form 
$\Delta \colon f\mapsto  -\nabla^\dagger (\nabla f)$,
acts edgewise as the following divergence form Sturm--Liouville operator
\begin{align}\label{eq:LaplMetrG}
\Delta\colon f\mapsto \frac{1}{\mu}(\nu f')'.
\end{align}
The continuity assumption imposed on $f$ results for $\Delta$ in a one-parameter family of symmetric boundary conditions (the so-called $\delta$-coupling). In the present text, with the Laplacian $\Delta$ acting on $\cG$ we shall always associate  
the {\em Kirchhoff} vertex conditions~\eqref{eq:kirchhoff}. 
In particular, imposing these boundary conditions on the maximal domain yields the \emph{(maximal) Kirchhoff Laplacian}:
\begin{align}\label{eq:H}
\begin{split}
	\bH  =  -\Delta\upharpoonright &{\dom(\bH)},\\
	& \dom(\bH ) = \{f\in \dom(\bH_{\max})\,|\, f\ \text{satisfies}~\eqref{eq:kirchhoff}\ \text{on}\ \cV\}.
\end{split}
\end{align}
 
\subsection{Energy forms} \label{ss:IV.03}

Restricting further to compactly supported functions we end up with the pre-minimal operator
\begin{align}\label{eq:Halpha0}
	\bH'  =  -\Delta\upharpoonright {\dom(\bH')},\qquad 
	 \dom(\bH')  = \dom(\bH) \cap C_c(\cG).
\end{align}
Integrating by parts one obtains for all $f \in \dom(\bH')$
\begin{align}\label{eq:QFalpha}
	\langle\bH' f, f\rangle_{L^2} = \int_\cG |\nabla f(x)|^2 \, \nu(\rD x) =: \gQ[f] ,
\end{align}
which implies that $\bH'$ is a nonnegative symmetric operator in $L^2(\cG;\mu)$. We define $\bH^0$ as the closure of $\bH'$ in $L^2(\cG;\mu)$. It is standard to show that
\begin{align}\label{eq:H0*=HA}
		(\bH')^\ast = \bH.
\end{align}
In particular, the equality $\bH^0 = \bH$ holds if and only if $\bH$ is self-adjoint (or, equivalently, $\bH'$ is essentially self-adjoint). 

With the form $\gQ$ we associate two spaces: the Sobolev space $H^1(\cG) = H^1(\cG;\mu,\nu)$ is defined as the subspace of $L^2(\cG;\mu)$ consisting of continuous functions, which are edgewise absolutely continuous and have finite energy $\gQ[f] < \infty$. Equipping $H^1(\cG)$ with the standard graph norm turns it into a Hilbert space. Also, we define the space $H^1_0(\cG) = H^1_0(\cG;\mu,\nu)$ as the closure of compactly supported $H^1$ functions,
\[
H^1_0 = H^1_0(\cG;\mu,\nu):= \overline{H^1_c(\cG)}^{\|\cdot\|_{H^1(\cG;\mu,\nu)}},
\]
where $H^1_c(\cG) := H^1(\cG)\cap C_c(\cG)$. 
Restricting $\gQ$ to these spaces, we end up with two closed forms in $L^2(\cG;\mu)$:
\begin{align}
\gQ_D & = \gQ\upharpoonright{H^1_0}, & \gQ_N & = \gQ\upharpoonright{H^1}.
\end{align}
According to the representation theorem, they give rise to two self-adjoint nonnegative operators $\bH_D$ and $\bH_N$ in $L^2(\cG;\mu)$, the Dirichlet and Neumann Laplacians, respectively. Notice also that $\bH_D$ coincides with the Friedrichs extension of $\bH'$:
\[
\dom(\bH_D) = \dom(\bH)\cap H^1_0(\cG).
\]

\begin{remark}
Following the analogy with the Friedrichs extension, it might be tempting 
to think that the domain of the Neumann Laplacian $\bH_N$ is given by $\dom(\bH)\cap H^1(\cG)$. However, the operator defined on this domain has a 
different name --- the {\em Gaffney Laplacian} --- and it is not symmetric in general. Moreover, this operator is not always closed (see \cite{kn20}). 
\end{remark}

\section{Connections}\label{sec:connection}

One of the immediate ways to relate Laplacians on metric and discrete graphs is by noticing a connection between their harmonic functions. Despite being elementary, this observation lies at the core of many of our considerations and hence we briefly sketch it here. Every harmonic function $f$ on a weighted metric graph $(\cG, \mu, \nu)$ (i.e., $f$ satisfies $\Delta f = 0$), must be edgewise affine. The Kirchhoff conditions \eqref{eq:kirchhoff} imply that $f$ is continuous and, moreover, satisfies 
\[
\sum\limits_{\vec{e}\in \vec{\cE}_v} \nu(e)\partial_{\vec{e}} f(v) = 
\sum_{u\sim v} \sum_{\vec{e} \in \vec{\cE}_{u}\colon e\in\cE_v } \frac{\nu(e)}{|e|} \big (f(u)- f(v) \big) = 0
\]
at each vertex $v\in\cV$. 
This suggests to consider a discrete Laplacian \eqref{eq:LaplDiscr} with edge weights given by 
\begin{align}\label{eq:Bbndry}
b(u,v)  
=  \begin{cases}  
                 \sum_{\vec{e} \in \vec{\cE}_{u}\colon e\in\cE_v } \frac{\nu(e)}{|e|}, & u \neq v,  \\[1mm] 
                  \quad 0,  & u = v.
       \end{cases}
\end{align}
Indeed, then for every $\Delta$-harmonic function $f$ on the weighted metric graph $(\cG, \mu, \nu)$, its restriction to vertices $\f := f|_\cV$ is an $L$-harmonic function, that is, $L\f = 0$. Moreover, the converse is also true. Phrased in a more formal way, the  map
\begin{align}\label{eq:mapV}
\begin{array}{cccc}
\imath_\cV \colon & C(\cG) & \to & C(\cV) \\
 & f & \mapsto & f|_\cV 
 \end{array},
\end{align}
when restricted further to the space of continuous, edgewise affine functions 
on $\cG$ becomes bijective and establishes a bijective correspondence between $\Delta$-harmonic and $L$-harmonic functions. 
This indicates a possible connection between the corresponding Laplacians on $\cG$ and $\cG_d$ (this immediately connects, for instance, the corresponding Poisson and Martin boundaries). However, one also has to take into account the measures $\mu$ and $m$, that is, the vertex weight $m$ should be chosen in a way which connects the corresponding Hilbert spaces $L^2(\cG;\mu)$ and $\ell^2(\cV;m)$. The desired connection is given by the choice
\begin{align}\label{eq:Mbndry}
m\colon v\mapsto \sum_{\vec{e}\in\vec{\cE}_v} |e|\mu(e),\qquad v\in\cV,
\end{align}
under the additional assumption that $(\cG,\mu,\nu)$ has {\em finite intrinsic size}:
\begin{align}\label{eq:finsize}
\intrL^\ast(\cE) := \sup_{e\in\cE}|e|\sqrt{\frac{\mu(e)}{\nu(e)}} <\infty.
\end{align}
The quantity $\intrL(e):= |e|\sqrt{\frac{\mu(e)}{\nu(e)}}$ is the {\em intrinsic length} of the edge $e\in\cE$. See Section~\ref{ss:IntrMet.01} for further details. 

In at least two special cases, the correspondence between the Kirchhoff Laplacian for $(\cG, \mu, \nu)$ and the discrete Laplacian for the above weights $b$ and $m$ has been known for a quite long time. First of all, in the case of so-called unweighted {\em equilateral metric graphs} (i.e., $\mu=\nu =\id$ on $\cG$ and $|e|=1$ for all edges $e$), \eqref{eq:LaplDiscr} with the weights \eqref{eq:Bbndry},\eqref{eq:Mbndry} turns into the normalized Laplacian \eqref{eq:LaplNorm}. 
Connections between their spectral properties have been established in \cite{ni85}, \cite{vB} for finite metric graphs and then extended in \cite{cat}, \cite{ex97}, \cite{bgp07} to infinite metric graphs, 
 and in fact one can even prove some kind of local unitary equivalence \cite{pan12}. Thus, these results allow to reduce the study of Laplacians on equilateral metric graphs to a widely studied object --- the normalized Laplacian $L_{\rm norm}$, the generator of the simple random walk on $\cG_d$ (see \cite{bar}, \cite{cdv}, \cite{woe}).
The second well-studied case is a slight generalization of the above setting: again, $|e| = 1$ for all edges $e$, however, $\mu=\nu$ on $\cG$ (these are named {\em cable systems} in the work of Varopoulos \cite{var85b}). The corresponding Laplacian $L$ with the coefficients \eqref{eq:Bbndry}, \eqref{eq:Mbndry} is the generator of a discrete time random walk on $\cG_d$ with the probability of jumping from $v$ to $u$ given by 
\begin{align*}
 p (u,v) = \frac{\mu(e_{u,v})}{\sum_{w \sim v} \mu(e_{u,w})}\quad \text{when}\quad u\sim v, 
\end{align*}
and $0$ otherwise. There is a close connection between this random walk and the Brownian motion on the cable system and exactly this link has been exploited several times in the literature (see  \cite{var85b} and some recent works  \cite{baba03}, \cite{fo13}, \cite{fo}).
 
For a given maximal Kirchhoff Laplacian $\bH$ in $L^2(\cG;\mu)$, let us denote the corresponding maximal Laplacian with the weights \eqref{eq:Bbndry}, \eqref{eq:Mbndry} by $\rh = \rh(\cG,\mu,\nu)$. Assuming that the underlying model of $(\cG,\mu,\nu)$ has finite intrinsic size~\eqref{eq:finsize}, it turns out that $\bH$ and $ \rh(\cG,\mu,\nu)$ share many basic properties:

\begin{itemize}
\item[] {\bf Spectral Properties:}
\begin{itemize}
\item {\em Self-adjoint uniqueness}, see \cite[\S~4]{EKMN}, \cite[Chap.~3]{kn21}.
\item {\em Positive spectral gap}, see \cite[\S~4]{EKMN}, \cite{kn19}, \cite[Chap.~3]{kn21}.
\item {\em Ultracontractivity estimates}, see \cite{roso10}, \cite[\S~5.2]{EKMN}, \cite[Chap.~4.8]{kn21}
\end{itemize}
\item[] {\bf Parabolic Properties:}
\begin{itemize}
\item {\em Markovian uniqueness}, see \cite[Chap.~4.4]{kn21}.
\item {\em Recurrence/transience}, see \cite[Chap.~4]{hae}, \cite[Chap.~4.5]{kn21}.
\item {\em Stochastic completeness}, see \cite{fo13}, \cite{hua}, \cite{hks}, \cite{hush14}, \cite[Chap.~4.6]{kn21}.
\end{itemize}
\end{itemize}

The above lists are by no means complete and we refer to the recent monograph~\cite{kn21} for further details, results, and literature. 
 
\begin{remark} 
In fact, the idea to relate the properties of $\Delta$ and $L$ by taking into account the relationship between their kernels has its roots in the fundamental works of M.G.~Krein, M.I.~Vishik and M.Sh.~Birman in the 1950s. Indeed, it turns out that $L$ serves as a ``boundary operator" for $\Delta$ and exactly this fact allows to connect basic spectral properties of these two operators. However, in order to make all that precise one needs to use the machinery of boundary triplets and the corresponding Weyl functions, a modern language of extension theory of symmetric operators in Hilbert spaces, which can be seen as far-reaching development of the Birman--Krein--Vishik theory (see \cite{DM91}, \cite{DM17}, \cite{schm}). First applications of this approach to finite and infinite metric graphs can be traced back to the 2000s (see, e.g., \cite{bgp07}, \cite{ekkst08}, \cite{post}). One of its advantages is the fact that the boundary triplets approach allows to treat metric graphs avoiding the restrictive assumptions on the edge lengths \cite{EKMN}, \cite{KM10}. 
\end{remark}

\section{Cable systems for graph Laplacians}\label{sec:Bndry}

The above considerations naturally lead to the following question: {\em which graph Laplacians may arise as ``boundary operators" for a Kirchhoff Laplacian on a weighted metric graph?} 
Let us be more precise. Suppose a vertex set $\cV$ is given. Each graph Laplacian \eqref{eq:LaplDiscr} is determined by the vertex weight $m\colon \cV\to (0,\infty)$ and the edge weight function $b\colon \cV\times\cV\to [0,\infty)$ having the properties (i)--(iv) of Section~\ref{ss:III.01}. 
 With each such $b$ we can associate a locally finite simple graph $\cG_b = (\cV,\cE_b)$ as described in Remark~\ref{rem:simplevsmult}.

\begin{definition}\label{def:Cable}
A \emph{cable system}\label{not:cable} for a graph $b$ over $(\cV,m)$ is a model 
of a weighted metric graph $(\cG, \mu, \nu)$ having $\cV$ as its vertex set and such that the functions defined by~\eqref{eq:Mbndry} and~\eqref{eq:Bbndry} coincide with $m$ and, respectively, $b$. 
If in addition the underlying graph $(\cV, \cE)$ of the model coincides with $\cG_b = (\cV,\cE_b)$, then the cable system is called {\em minimal}.\label{not:cablemin}
\end{definition}

Thus, the problem stated at the very beginning now can be formulated as follows: {\em Which locally finite graphs $(\cV,m;b)$ have a (minimal) cable system?} It turns out that the existence of a minimal cable system is a nontrivial issue already in the case of a path graph (see \cite[Chap.~5.3]{kn21}). Let us also present the following example.

\begin{example}[Cable systems for $L_{\rm comb}$]\label{ex:CSforComb}
Consider the combinatorial Laplacian $L_{\rm comb}$ on a simple, connected, locally finite graph $\cG_d$, that is, $m\equiv \id$ on $\cV$, $b(u,v) = 1$ exactly when $u\sim v$ and $u\neq v$, and $b(u,v) = 0$ otherwise. It turns out that\footnote{\url{https://mathoverflow.net/questions/59117/}: {\em Assigning positive edge weights to a graph so that the weight incident to each vertex is 1},  (2011).} in this case $(\cV,m;b)$ admits a minimal cable system if and only if for each $e\in\cE$ there is a disjoint cycle cover of $\cG_d$ containing $e$ in one of its cycles.
\end{example}

However, we stress that a general cable system may have loops and multiple edges and thus the simplicity assumption on the model of $(\cG,\mu,\nu)$ (that is, the minimality of a cable system for $(\cV,m;b)$) might be too restrictive. 
Moreover, the underlying combinatorial graph $(\cV, \cE)$ of a cable system for $b$ can always be obtained from the simple graph $\cG_b = (\cV, \cE_b)$ by adding loops and multiple edges. The next result was proved in \cite{fo13} (see also \cite[Chap.~6.3]{kn21}).

\begin{theorem} \label{th:loops}
Every locally finite graph $(\cV, m; b)$ has a cable system.
\end{theorem}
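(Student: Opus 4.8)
The plan is to construct a cable system explicitly from the combinatorial data of $\cG_b = (\cV, \cE_b)$ by choosing, for each edge $e_{u,v} \in \cE_b$, a suitable length $|e_{u,v}|$ and suitable edge weights $\mu(e_{u,v})$, $\nu(e_{u,v})$ so that the two matching conditions \eqref{eq:Bbndry} and \eqref{eq:Mbndry} are satisfied. Since the target values $b(u,v) > 0$ and $m(v) > 0$ are prescribed, we have three positive parameters per edge and essentially two types of constraints --- one equation per edge (the $b$-condition) and one per vertex (the $m$-condition) --- so there is enough freedom, but the vertex equations couple neighbouring edges and must be solved compatibly. The first step is therefore to rewrite the two conditions in a convenient form: using a loopless simple model on $\cG_b$, condition \eqref{eq:Bbndry} reads $\nu(e_{u,v})/|e_{u,v}| = b(u,v)$, while \eqref{eq:Mbndry} reads $\sum_{u \sim v} |e_{u,v}|\,\mu(e_{u,v}) = m(v)$.

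The key idea is to decouple these by parametrising differently. From the first condition, fix $\nu(e_{u,v}) := b(u,v)\,|e_{u,v}|$ for any choice of length, so $\nu$ is determined once $|\cdot|$ is chosen. It remains to pick lengths $|e_{u,v}| > 0$ and edge measures $\mu(e_{u,v}) > 0$ so that $\sum_{u \sim v} |e_{u,v}|\,\mu(e_{u,v}) = m(v)$ at every vertex. Introducing the auxiliary quantity $c(e_{u,v}) := |e_{u,v}|\,\mu(e_{u,v}) > 0$, the vertex condition becomes the purely combinatorial requirement of finding an edge function $c \colon \cE_b \to (0,\infty)$ with $\sum_{u \sim v} c(e_{u,v}) = m(v)$ for all $v$; once such a $c$ is found, one sets, e.g., $|e_{u,v}| := 1$ and $\mu(e_{u,v}) := c(e_{u,v})$ (or distributes $c$ between $|\cdot|$ and $\mu$ any way one likes), and the construction closes up. One should also check the finite-intrinsic-size requirement is not needed here --- Theorem~\ref{th:loops} only asserts existence of a cable system as in Definition~\ref{def:Cable}, with no constraint~\eqref{eq:finsize}.

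So the crux reduces to: \emph{given a locally finite connected graph and a prescribed positive vertex total $m(v)$, does there exist a positive edge weight $c$ whose vertex sums are exactly $m(v)$?} This is where allowing multiple edges and loops becomes essential (and is why the theorem holds in general, unlike Example~\ref{ex:CSforComb}): passing from $\cG_b$ to a multigraph, one has complete freedom to add parallel copies of edges and loops. Here a loop at $v$ contributes to $m(v)$ with multiplicity two (cf.\ \eqref{eq:Mbndry}, where the sum is over $\vec{\cE}_v$). The plan is to build the cable system greedily/inductively: enumerate $\cV = \{v_1, v_2, \dots\}$; at stage $n$, having already committed some weight to edges among $\{v_1,\dots,v_{n-1}\}$, one must still realise the residual amount $m(v_k) - (\text{weight already at } v_k)$ at each processed vertex while leaving room at $v_n$ and the unprocessed vertices. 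The subtle point --- and the main obstacle --- is to keep these residuals strictly positive and consistent as the induction proceeds to infinity; this is handled by always reserving a positive fraction of each $m(v)$ for edges to not-yet-processed neighbours (using connectivity and local finiteness), and, if a processed vertex has all its residual budget about to be exhausted, absorbing the slack into a loop at that vertex. For a \emph{finite} graph one can argue directly: scale the naive assignment, or note the map $c \mapsto (\sum_{u\sim v} c(e_{u,v}))_v$ from $(0,\infty)^{\cE_b}$ (enlarged by added loops/parallel edges) onto $(0,\infty)^\cV$ is surjective. The infinite case then follows by an exhaustion argument, treating each connected "annulus" $\cV_{n}\setminus\cV_{n-1}$ of a vertex exhaustion as a finite problem with boundary data carried by the connecting edges.

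The step I expect to be the genuine difficulty is precisely this last bookkeeping: ensuring that the inductive/exhaustion construction never forces a nonpositive edge weight and that every vertex condition is met exactly in the limit. Concretely, when processing a vertex $v_n$ whose combinatorial neighbours in $\cG_b$ have all already been assigned (so no "outgoing" edges to fresh vertices are available to absorb residual mass), one needs the loop-at-$v_n$ trick to rescue positivity; verifying this always suffices, and that the resulting multigraph is still locally finite (only finitely many loops/parallel edges added near each vertex), is the technical heart of the argument. Everything else --- recovering $\mu, \nu$ from $c$ and the chosen lengths, and checking Definition~\ref{def:Cable} is satisfied --- is routine.
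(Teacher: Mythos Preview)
Your reduction is correct and the key insight --- that loops contribute to the vertex mass~\eqref{eq:Mbndry} but not to the edge weight~\eqref{eq:Bbndry} (since $b(v,v)=0$ by convention) --- is exactly what makes the theorem work. However, you are overcomplicating the construction. The paper does not prove Theorem~\ref{th:loops} itself but cites Folz~\cite{fo13} (see also \cite[Chap.~6.3]{kn21}), and the argument there is completely direct, with no induction or exhaustion.

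The point is this: once you commit to placing a loop at \emph{every} vertex from the outset, rather than invoking loops as an emergency rescue inside an inductive scheme, the ``crux'' problem you identify becomes trivial. For each $e_{u,v}\in\cE_b$ set $|e_{u,v}|=1$, $\nu(e_{u,v})=b(u,v)$, and choose $\mu(e_{u,v})=c(e_{u,v})>0$ so small that $\sum_{u\sim v} c(e_{u,v}) < m(v)$ for every $v\in\cV$; for instance $c(e_{u,v})=\tfrac12\min\{m(u)/\deg_b(u),\,m(v)/\deg_b(v)\}$ works by local finiteness. Then at each $v$ add a single loop $\ell_v$ with $|\ell_v|=1$, arbitrary $\nu(\ell_v)>0$, and $\mu(\ell_v)=\tfrac12\bigl(m(v)-\sum_{u\sim v} c(e_{u,v})\bigr)>0$. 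The loop contributes $2|\ell_v|\mu(\ell_v)$ to~\eqref{eq:Mbndry} and nothing to~\eqref{eq:Bbndry}, so both conditions hold exactly, the model is locally finite (one extra loop per vertex), and we are done in one stroke.

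Your inductive bookkeeping, residual budgets, and exhaustion by annuli are therefore unnecessary --- there is no ``genuine difficulty'' here. Also, your remark that parallel edges give additional freedom is misleading: any extra edge between distinct $u,v$ contributes a strictly positive term $\nu(e)/|e|$ to $b(u,v)$ in~\eqref{eq:Bbndry}, so parallel edges do \emph{not} provide free mass for the $m$-condition without disturbing $b$. Only loops do, and that is the entire mechanism.
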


After establishing existence of cable systems, the next natural question is their uniqueness. In fact, every locally finite graph $b$ over $(\cV, m)$ has a large number of cable systems. In particular, the construction in~\cite[Rem.~2, p.~2107]{fo13} is a special case of a general construction using different metrizations of discrete graphs. These connections will be discussed in the next section.

\section{Intrinsic metrics on graphs}\label{sec:IntrMetr}

\subsection{Intrinsic metrics on metric graphs}\label{ss:IntrMet.01}

We define the intrinsic metric $\varrho$ of a weighted metric graph $(\cG, \mu, \nu)$ as the intrinsic metric of its Dirichlet Laplacian $\bH_D$ (in particular, note that $\gQ_D$ is a strongly local, regular Dirichlet form). By \cite[eq.~(1.3)]{stu} (see also \cite[Theorem~6.1]{flw14}), $\varrho_{\rm intr}$ is given by
\[
	\varrho_{\rm intr}(x,y) = \sup \big \{ f(x) - f(y)\, | \, f \in \wh{\cD}_{\loc} \big \}, \qquad x, y \in \cG, 
\]
where the function space $\wh{\cD}_{\loc}$ is defined as
\[
\wh{\cD}_{\loc} = \big\{ f \in H^1_{\loc}(\cG)\, \big | \ \nu(x) |\nabla f (x) |^2 \le \mu(x)\ \ \text{for a.e.}\ x\in\cG \big\}.
\]
In fact, $\varrho_{\rm intr}$ admits an explicit description: define the {\em intrinsic weight}  
\begin{align}\label{eq:etadef}
\eta = \eta_{\mu,\nu} := \sqrt{ \frac{\mu }{ \nu}}\quad \text{on}\ \ \cG.
\end{align}
This weight gives rise to a new measure on $\cG$ whose density w.r.t. the Lebesgue measure is exactly $\eta$ (we abuse the notation and denote with $\eta$ both the edge weight and the corresponding measure).

Recall that a path $\cP$ in $\cG$ is a continuous and piecewise injective map $\cP \colon I\to \cG$ defined on an interval $I\subseteq \R$. If $\cI = [a,b]$ is compact, we call $\cP$ a path with starting point $x:=\cP(a)$ and endpoint $y: =\cP(b)$, and its \emph{(intrinsic) length}  is defined as
\begin{align} \label{eq:LengthMetricPath}
	|\cP|_{\eta} := \sum_j  \int_{\cP((t_j, t_{j+1}))} \eta(\rD x), 
\end{align}
where $a= t_0 < \dots < t_n =b$ is any partition of $\cI = [a,b]$ such that $\cP$ is injective on each interval $(t_j, t_{j+1})$
 (clearly, $|\cP|_\eta$ is well-defined). 
 
 \begin{lemma}\label{lem:IntrMetWMG}
The metric $\varrho_\eta$ defined by 
\begin{equation} \label{eq:compute_metric}
\varrho_\eta(x,y) :=\inf_\cP |\cP|_\eta, \qquad x, y \in \cG,
\end{equation} 
where the infimum is taken over all paths $\cP$ from $x$ to $y$, coincides with the intrinsic metric on $(\cG, \mu, \nu)$ (w.r.t. $\gQ_D$), that is, $\varrho_{\rm intr} = \varrho_\eta$.
\end{lemma}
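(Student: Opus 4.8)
The goal is to show that the length (path) metric $\varrho_\eta$ built from the intrinsic weight $\eta=\sqrt{\mu/\nu}$ agrees with the abstract Sturm intrinsic metric $\varrho_{\rm intr}$ attached to the strongly local Dirichlet form $\gQ_D$. I would prove the two inequalities $\varrho_\eta\le\varrho_{\rm intr}$ and $\varrho_{\rm intr}\le\varrho_\eta$ separately, since on a one-dimensional space like $\cG$ both directions are essentially explicit. The key observation throughout is that on each edge $e$, identified with $[0,|e|]$ carrying measures $\mu\,\rD x$ (for $L^2$) and $\nu\,\rD x$ (for the energy), the pointwise gradient bound $\nu(x)|\nabla f(x)|^2\le\mu(x)$ appearing in the definition of $\wh\cD_{\loc}$ is, because $\mu,\nu$ are constant on $e$, simply $|f'|\le\sqrt{\mu(e)/\nu(e)}=\eta(e)$ a.e.\ on $e$. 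Thus functions in $\wh\cD_{\loc}$ are exactly the edgewise Lipschitz (and globally continuous, since $H^1_{\loc}(\cG)$ enforces continuity at vertices) functions whose edgewise derivative is dominated by $\eta$.

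\textbf{Step 1: $\varrho_{\rm intr}\le\varrho_\eta$.} Fix $x,y\in\cG$ and take any $f\in\wh\cD_{\loc}$ and any path $\cP$ from $x$ to $y$. Along $\cP$, parametrized piecewise injectively, $f\circ\cP$ is absolutely continuous and $|(f\circ\cP)'|\le\eta$ in the arc-length sense because of the edgewise bound $|f'|\le\eta(e)$; integrating gives $f(x)-f(y)\le\int_{\cP}\eta(\rD x)=|\cP|_\eta$. Taking the infimum over $\cP$ and then the supremum over $f\in\wh\cD_{\loc}$ yields $\varrho_{\rm intr}(x,y)\le\varrho_\eta(x,y)$. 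This direction is routine once one unwinds that $\int_\cP\eta$ is exactly the total variation bound for $f\circ\cP$.

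\textbf{Step 2: $\varrho_\eta\le\varrho_{\rm intr}$.} Here one must produce, for fixed $x$, a competitor $f\in\wh\cD_{\loc}$ with $f(\cdot)=\varrho_\eta(x,\cdot)$, or at least approximations thereof; this is the main obstacle. The natural candidate is $f_x(y):=\varrho_\eta(x,y)$ itself. One checks that $f_x$ is continuous on $\cG$ (it is $1$-Lipschitz for $\varrho_\eta$, which induces the metric-graph topology since $\eta$ is bounded below on each edge and, locally, bounded above), is edgewise absolutely continuous, and satisfies $|\nabla f_x|\le\eta$ a.e.\ on each edge — the last point follows because on any short arc not through a vertex, $\varrho_\eta$ restricted there is just $\int\eta\,\rD x$, so $f_x$ restricted to that arc differs from $s\mapsto\int\eta$ by a constant on each side of the nearest-point-to-$x$, giving $|f_x'|=\eta$ or $|f_x'|\le\eta$ generally. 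One also needs $f_x\in H^1_{\loc}(\cG)$, i.e.\ local $L^2$-integrability of $f_x$ and $\nabla f_x$, which is immediate from boundedness on compacts. Hence $f_x\in\wh\cD_{\loc}$, and $\varrho_{\rm intr}(x,y)\ge f_x(x)-f_x(y)$... wait — since $f_x(x)=0$ one instead uses $-f_x$, or equivalently $f_x(y)-f_x(x)=\varrho_\eta(x,y)\le\varrho_{\rm intr}(x,y)$ by testing with $-f_x$.

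\textbf{Remarks on rigor.} The one genuine subtlety is that $\wh\cD_{\loc}$ is defined via $H^1_{\loc}(\cG)$, so one must confirm $f_x$ lies in that space and that the a.e.\ pointwise gradient bound holds globally (including in neighborhoods of vertices), which is fine because the vertex set is Lebesgue-null and $f_x$ is edgewise in $H^1$. A second point: $\varrho_\eta$ might be $+\infty$ if $\cG$ is disconnected, but Hypothesis~\ref{hyp:graph01} and connectivity of the model rule this out; if $\intrL^\ast(\cE)=\infty$ the metric can still be finite pointwise, and the argument is unaffected. Finally, one should note that $\gQ_D$ is indeed strongly local and regular — strongly local because $\gQ_D[f]=\int_\cG|\nabla f|^2\nu$ involves only the edgewise derivative, regular by the very definition $H^1_0=\overline{H^1_c}$ — so that the Sturm formula for $\varrho_{\rm intr}$ quoted from \cite{stu}, \cite{flw14} applies; this is already asserted in the statement's preamble and may be taken as given. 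Assembling Steps 1 and 2 gives $\varrho_{\rm intr}=\varrho_\eta$.
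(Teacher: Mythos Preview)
Your argument is correct and is essentially the standard one: the paper does not actually prove the lemma but defers to \cite[Prop.~2.21]{hae} and \cite[Lemma~4.3]{kmn21}, calling it ``straightforward'', and what you have written is precisely that straightforward proof---the two-sided inequality via the edgewise bound $|f'|\le\eta(e)$ and the test function $f_x=\varrho_\eta(x,\cdot)$. The sign correction you flag in Step~2 (testing with $-f_x$) is handled correctly.
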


The proof is straightforward and can be found in, e.g., \cite[Prop.~2.21]{hae} (see also~\cite[Lemma~4.3]{kmn21}). Notice that in the case $\mu=\nu$, $\eta$ coincides with the Lebesgue measure and hence $\varrho_\eta$ is nothing but the length metric $\varrho_0$ on $\cG$ (see Section~\ref{ss:II.02}).  

\begin{remark}\label{rem:intrEdgeLength}
If a path $\cP_e$ consists of a single edge $e\in \cE$, then
\[
|\cP_e|_\eta = \int_e \eta(\rD x) = |e|\sqrt{ \frac{\mu(e) }{ \nu(e)}} = \eta(e),
\]
which connects $\varrho_{\rm intr} = \varrho_\eta$ on $(\cG,\mu,\nu)$ with the intrinsic edge length (see \eqref{eq:finsize}).
\end{remark}

\subsection{Intrinsic metrics on discrete graphs}\label{ss:VI.3.02}
The idea to use different metrics on graphs can be traced back at least to \cite{dav93b} and versions of metrics adapted to weighted discrete graphs have appeared independently in several works, see, e.g., \cite{fo13}, \cite{fo}, \cite{ghm}, \cite{maue11}. In our exposition we follow \cite{flw14}, \cite{kel15}.

For a connected graph $b$ over $(\cV, m)$, a symmetric function $p\colon \cV \times \cV \to [0,\infty)$ such that $p(u,v)>0$ exactly when $b(u,v) >0$ is called a \emph{weight function} for $(\cV,m;b)$. Every weight function $p$ generates a {\em path metric} $\varrho_p$ on $\cV$ w.r.t. $b$ via
\begin{align}\label{eq:pathmetric}
\varrho_p(u,v) := 
\inf_{\cP= (v_0,\dots,v_n)\colon u=v_0,\ v=v_n}\sum_{k} p(v_{k-1},v_k). 
\end{align}
Here the infimum is taken over all paths in $b$ connecting $u$ and $v$, that is, all sequences $\cP = (v_0, \dots, v_n)$ such that $v_0 = u$, $v_n = v$ and $b(v_{k-1}, v_{k}) > 0$ for all $k$. Since we assume $b$ to be locally finite, $\varrho_p(u,v) > 0$ whenever $u \neq v$.

\begin{example}[Combinatorial distance]\label{ex:pathmetrics}
Let $p\colon \cV\times\cV \to \{0,1\}$ be given by 
\begin{align}
p(u,v) = \begin{cases} 1, & b(u,v) \neq 0,\\ 0, & b(u,v) = 0.\end{cases}
\end{align}
Then the corresponding metric $\varrho_p$ is nothing but the combinatorial distance $\varrho_{\rm comb}$ (a.k.a. the {\em word metric} in the context of Cayley graphs) on a graph $b$ over $\cV$.
\end{example}

\begin{definition}[\cite{flw14}]\label{def:IntrMetrGr}
A metric $\varrho$ on $\cV$ is called {\em intrinsic} w.r.t. $(\cV,m;b)$ if 
\begin{align}\label{eq:intrinsicdef}
 \sum_{u\in\cV}b (u,v)\varrho(u,v)^2\le m (v)
\end{align}
holds for all  $v\in\cV$. Similarly, a weight function $p\colon \cV \times \cV \to [0,\infty)$ is called an \emph{intrinsic weight} for $(\cV,m;b)$ if 
\[
	 \sum_{u\in\cV}b (u,v) p(u,v)^2\le m (v),\qquad v\in\cV.
\]
If $p$ is an intrinsic weight, then the path metric $\varrho_p$ is called {\em strongly intrinsic}. 
\end{definition}

For any given locally finite graph $(\cV,m;b)$ an intrinsic metric always exists (see \cite[Example~2.1]{hkmw13}, \cite{kel15} and also \cite{dVTHT}). 

\begin{remark}\label{rem:combIntrMetr}
It is straightforward to check that the combinatorial distance $\varrho_{\rm comb}$ is not intrinsic for the combinatorial Laplacian $L_{\rm comb}$ ($m\equiv \id$ on $\cV$ in this case). On the other hand,  $\varrho_{\rm comb}$ is equivalent to an intrinsic path metric if and only if $\deg$ is bounded on $\cV$, that is, the corresponding graph has bounded geometry. If $\sup_{\cV}\deg(v) = \infty$, then $L_{\rm comb}$ is unbounded in $\ell^2(\cV)$ and it turned out that $\varrho_{\rm comb}$ is not a suitable metric on $\cV$ to study the properties of $L_{\rm comb}$ (in particular, this has led to certain controversies in the past, see \cite{klw}, \cite{woj11}).
\end{remark}

\subsection{Connections between discrete and continuous}\label{ss:VI.3.03}

Consider a weigh\-ted metric graph $(\cG, \mu, \nu)$ and its intrinsic metric $\varrho_\eta$. With each model of $(\cG, \mu, \nu)$ we can associate the vertex set $\cV$ together with the vertex weight $m \colon \cV \to (0, \infty)$ and the graph $b\colon \cV \times \cV \to [0, \infty)$, see \eqref{eq:Mbndry},\eqref{eq:Bbndry}. The next result shows that the intrinsic metric $\varrho_\eta$ of $(\cG, \mu, \nu)$ gives rise to a particular intrinsic metric for $(\cV,m;b)$.

\begin{lemma}[\cite{kn21}] \label{lem:intrinsic_metrics}
Fix a model of $(\cG, \mu, \nu)$ having finite intrinsic size and define the metric $\varrho_\cV$ on $\cV$ as a restriction of $\varrho_\eta$ onto $\cV \times \cV$,
\begin{align}\label{eq:varrho_V}
\varrho_\cV(u,v) := \varrho_\eta(u,v),\qquad (u,v)\in \cV\times\cV.
\end{align}
Then $\varrho_\cV$ is an intrinsic metric for $(\cV,m;b)$. 
Moreover, $(\cG, \varrho_\eta)$ is complete as a metric space exactly when $(\cV, \varrho_\cV)$ is complete.
\end{lemma}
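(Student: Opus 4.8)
The plan is to verify the intrinsic inequality \eqref{eq:intrinsicdef} for $\varrho_\cV$ directly from the definitions, and then deduce the completeness equivalence from basic metric-space topology together with the structure of $(\cG,\varrho_\eta)$.

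\textbf{Step 1: $\varrho_\cV$ is intrinsic.} Fix $v\in\cV$. We must show $\sum_{u\in\cV} b(u,v)\,\varrho_\cV(u,v)^2 \le m(v)$. Since $b$ is locally finite, the sum is finite, and by \eqref{eq:Bbndry} it decomposes over the edges incident to $v$: each $e\in\cE_v$ (equivalently each $\vec e\in\vec\cE_v$, accounting for loops and multiple edges) connects $v$ to some neighbour $u$ and contributes $\frac{\nu(e)}{|e|}$ to $b(u,v)$. For such an edge, the single-edge path $\cP_e$ from $v$ along $e$ is a genuine path in $\cG$, so by Remark~\ref{rem:intrEdgeLength},
\[
\varrho_\eta(u,v) \le |\cP_e|_\eta = |e|\sqrt{\tfrac{\mu(e)}{\nu(e)}} = \eta(e).
\]
Hence $\varrho_\cV(u,v)^2 = \varrho_\eta(u,v)^2 \le |e|^2\,\tfrac{\mu(e)}{\nu(e)}$, and summing the weighted contributions,
\[
\sum_{u\in\cV} b(u,v)\,\varrho_\cV(u,v)^2 \le \sum_{\vec e\in\vec\cE_v} \frac{\nu(e)}{|e|}\cdot |e|^2\,\frac{\mu(e)}{\nu(e)} = \sum_{\vec e\in\vec\cE_v} |e|\,\mu(e) = m(v)
\]
by the definition \eqref{eq:Mbndry} of $m$. (A small point to be careful about: when several edges connect $v$ to the same $u$, the upper bound $\varrho_\eta(u,v)\le\eta(e)$ must be applied edge-by-edge inside the sum over $\vec\cE_v$ rather than after collapsing to a sum over neighbours; this is exactly why working with $\vec\cE_v$ is the right bookkeeping device.) Note also that finite intrinsic size \eqref{eq:finsize} guarantees $\varrho_\eta(u,v)<\infty$ and keeps all quantities finite, and in fact that $\varrho_\cV$ is genuinely a metric on $\cV$ (positivity follows since $\varrho_\eta$ is a metric on $\cG\supseteq\cV$).

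\textbf{Step 2: completeness equivalence.} The inclusion $\iota\colon(\cV,\varrho_\cV)\hookrightarrow(\cG,\varrho_\eta)$ is an isometric embedding by \eqref{eq:varrho_V}. If $(\cG,\varrho_\eta)$ is complete, then $\cV$ is complete iff it is closed in $\cG$; but $\cV$ is a discrete subset with no accumulation points — local finiteness of the model plus finite intrinsic size implies every point of $\cG$ has a neighbourhood meeting $\cV$ in at most one point (a vertex ball of intrinsic radius below $\min_{e\in\cE_v}\eta(e)$ contains no other vertex), so $\cV$ is closed, giving completeness of $(\cV,\varrho_\cV)$. Conversely, suppose $(\cV,\varrho_\cV)$ is complete and let $(x_n)$ be Cauchy in $(\cG,\varrho_\eta)$; one replaces each $x_n$ by a nearby vertex $v_n$ (the nearest vertex, or a vertex on the same edge) so that $\varrho_\eta(x_n,v_n)$ stays bounded by the diameter of the edge containing $x_n$ — here one must rule out the sequence escaping along edges of vanishing intrinsic length, which uses finite intrinsic size from below in a suitable local sense, or else a direct argument that a Cauchy sequence eventually stays within a single edge or within a fixed finite set of edges. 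Then $(v_n)$ is Cauchy in $\cV$, hence converges to some $v$, and edgewise compactness of $\cG$ forces $(x_n)$ to converge as well.

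\textbf{Main obstacle.} Step~1 is genuinely routine once the edge-versus-vertex bookkeeping is set up with $\vec\cE_v$. The real care is in Step~2, specifically in the direction ``$(\cV,\varrho_\cV)$ complete $\Rightarrow(\cG,\varrho_\eta)$ complete'': a Cauchy sequence in $\cG$ need not lie in $\cV$, and one must control its behaviour on the interiors of edges. The clean way is to observe that any closed edge $\bar e$ is isometric (in $\varrho_\eta$ restricted to $\bar e$) to a compact interval of length $\eta(e)$, hence complete, and that a Cauchy sequence either is eventually trapped in the union of finitely many such edges (a compact set, where it converges) or else its associated vertex sequence is Cauchy and non-stationary in $\cV$, hence convergent by hypothesis, and one concludes by a neighbourhood argument at the limit vertex. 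Since Lemma~\ref{lem:IntrMetWMG} identifies $\varrho_\eta$ with a path (length) metric, the standard fact that a length space is complete iff it is ``edgewise complete and vertices form a complete discrete net'' can be invoked to streamline this; but pinning down that statement precisely for metric graphs is where the bulk of the writing goes.
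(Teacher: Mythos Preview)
The paper does not actually prove this lemma; it is attributed to \cite{kn21} and stated without argument. The only related content is Lemma~\ref{lem:RoughIso}, which records that the inclusion $\cV\hookrightarrow\cG$ is a quasi-isometry (an isometric embedding whose image is an $\eta^\ast(\cE)$-net), but the completeness equivalence is not derived from this in the present text. So there is no paper proof to compare against directly; let me instead assess your argument.

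Your Step~1 is correct and cleanly executed; the single-edge bound $\varrho_\eta(u,v)\le\eta(e)$ together with the bookkeeping over $\vec\cE_v$ is exactly what is needed. One small misattribution: $\varrho_\eta(u,v)<\infty$ follows from connectivity, and finiteness of the sum from local finiteness; the finite intrinsic size hypothesis is not actually used here.

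Your Step~2 and the ``Main obstacle'' paragraph identify the right dichotomy, but two points deserve correction. First, closedness of $\cV$ in $\cG$ needs only local finiteness: any path from $u\in\cV\setminus\{v\}$ to $v$ must traverse a full edge of $\cE_v$, so $\varrho_\eta(u,v)\ge\min_{e\in\cE_v}\eta(e)>0$, and an analogous argument rules out accumulation at interior points of edges. Second, your concern about needing ``finite intrinsic size from below'' is misplaced --- no lower bound on $\eta(e)$ is required. If a Cauchy sequence $(x_n)$ visits infinitely many distinct edges, pass to a subsequence on pairwise distinct edges $e_n$; since any path from $x_n$ to $x_{n+1}$ must exit $e_n$ through an endpoint, one gets $\min_{w\in\partial e_n}\varrho_\eta(x_n,w)\le\varrho_\eta(x_n,x_{n+1})\to 0$, so the nearest-endpoint sequence is Cauchy in $\cV$ and you conclude. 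Thus neither direction of the completeness equivalence uses \eqref{eq:finsize}; that hypothesis enters the paper's framework elsewhere (e.g.\ the net property in Lemma~\ref{lem:RoughIso}), not in this lemma.
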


Let us mention that Lemma~\ref{lem:intrinsic_metrics} also has an interpretation in terms of \emph{quasi-isometries} (see, e.g., \cite[Def.~1.12]{bar}, \cite[Sec.~1.3]{nowak}, \cite{roe}).

\begin{definition}\label{def:RoughIso}
A map $\phi \colon X_1 \to X_2$ between metric spaces $(X_1, \varrho_1)$ and $(X_2, \varrho_2)$ is called \emph{a quasi-isometry}\label{not:roughiso} if there are constants $a,R>0$ and $b\ge 0$ such that
\begin{equation} \label{eq:rough_isometry_1}
a^{-1} (\varrho_1(x,y) - b) \le \varrho_2 (\phi(x) ,\phi(y)) \le a (\varrho_1(x,y) + b), 
\end{equation}
for all $x, y \in X_1$ and, moreover, 
\begin{equation} \label{eq:rough_isometry_2}
\bigcup_{x \in X_1} B_R (\phi(x);\varrho_2) = X_2.
\end{equation}
Here and below $B_R (x;\varrho) = \{y\in X\,|\, \varrho(x,y)<R\}$ is a ball in a metric space $(X,\varrho)$.
\end{definition}

It turns out that the map $\imath_\cV$ defined in Section~\ref{sec:connection} is closely related with a quasi-isometry between weighted graphs and metric graphs:

\begin{lemma}\label{lem:RoughIso}
Assume the conditions of Lemma~\ref{lem:intrinsic_metrics}. Then the map
\begin{align}
\varphi \colon \cV \to \cG, \qquad \varphi(v) = v
\end{align}
defines a quasi-isometry between the metric spaces $(\cG, \varrho_\eta)$ and $(\cV, \varrho_\cV)$. 
\end{lemma}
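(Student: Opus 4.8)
The plan is to verify the two inequalities in Definition~\ref{def:RoughIso} directly, using the explicit description of $\varrho_\eta$ via path lengths (Lemma~\ref{lem:IntrMetWMG}) and the definition \eqref{eq:varrho_V} of $\varrho_\cV$ as its restriction to $\cV\times\cV$. Since $\varphi$ is literally the inclusion $\cV\hookrightarrow\cG$, the first candidate for \eqref{eq:rough_isometry_1} is trivial on one side: by definition $\varrho_\eta(\varphi(u),\varphi(v)) = \varrho_\eta(u,v) = \varrho_\cV(u,v)$ for $u,v\in\cV$, so one may take $a=1$ and $b=0$ there. The real content is therefore not the comparison of the two metrics on $\cV$ (they agree), but rather the statement \eqref{eq:rough_isometry_2}: one must show that every point $x\in\cG$, including interior points of edges, lies within bounded $\varrho_\eta$-distance of some vertex.

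First I would establish the covering property \eqref{eq:rough_isometry_2}. Given $x\in\cG$, it lies on some edge $e\in\cE$ (or is itself a vertex, in which case nothing to prove), and by Remark~\ref{rem:intrEdgeLength} the intrinsic length of the whole edge is $\eta(e) = |e|\sqrt{\mu(e)/\nu(e)}$. The path along $e$ from $x$ to the nearer endpoint has intrinsic length at most $\eta(e)/2 \le \intrL^\ast(\cE)/2$, which is finite by the finite intrinsic size assumption \eqref{eq:finsize}. Hence $\varrho_\eta(x,v) \le \intrL^\ast(\cE)/2$ for that endpoint vertex $v$, so taking $R := \intrL^\ast(\cE)$ (or any number exceeding $\intrL^\ast(\cE)/2$) gives $\bigcup_{v\in\cV} B_R(\varphi(v);\varrho_\eta) = \cG$. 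This is where the hypothesis of finite intrinsic size is essential and is, I expect, the only place one genuinely uses it.

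It remains to double-check that the interpretation is consistent: the quasi-isometry goes \emph{from} $(\cV,\varrho_\cV)$ \emph{to} $(\cG,\varrho_\eta)$, so in \eqref{eq:rough_isometry_1} we set $(X_1,\varrho_1) = (\cV,\varrho_\cV)$ and $(X_2,\varrho_2) = (\cG,\varrho_\eta)$, and the bound reads $a^{-1}(\varrho_\cV(u,v) - b) \le \varrho_\eta(u,v) \le a(\varrho_\cV(u,v)+b)$, which holds with $a=1$, $b=0$ by the very definition \eqref{eq:varrho_V}. One small subtlety worth a sentence: one should note that $\varrho_\cV$ as defined really is a metric on $\cV$ (finiteness follows since $\cG$ is connected and every edge has finite intrinsic length; positivity off the diagonal follows as for $\varrho_p$ in Section~\ref{ss:VI.3.02}), so that the statement even makes sense; but this is already implicit in Lemma~\ref{lem:intrinsic_metrics}.

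The main (and essentially only) obstacle is cosmetic rather than mathematical: making sure the direction of the inclusion and the roles of $X_1,X_2$ in Definition~\ref{def:RoughIso} are matched correctly, and handling the degenerate cases (when $x$ is already a vertex, or when $e$ is a loop, in which case both endpoints of $e$ are the same vertex and the estimate is unchanged). No deep estimate is needed; the proof is a two-line argument once the finite-intrinsic-size bound on edge lengths is invoked.
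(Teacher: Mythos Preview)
Your proposal is correct and follows exactly the paper's own approach: a direct verification of \eqref{eq:rough_isometry_1} with $a=1$, $b=0$ (trivial since $\varrho_\cV$ is the restriction of $\varrho_\eta$) and of \eqref{eq:rough_isometry_2} with $R=\eta^\ast(\cE)$, using finite intrinsic size. Your observation that $R>\eta^\ast(\cE)/2$ already suffices is a harmless sharpening.
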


\begin{proof}
The proof is a straightforward check of \eqref{eq:rough_isometry_1} and \eqref{eq:rough_isometry_2} for the map $\phi$ with $a=1$, $b=0$ and 
$R =\eta^\ast(\cE)$ (notice that the finite intrinsic size \eqref{eq:finsize} is necessary for the net property \eqref{eq:rough_isometry_2} to hold).
\end{proof}

\begin{remark}\label{rem:RoughIso}
The notion of quasi-isometries was introduced in the works of M.~Gromov and M.~Kanai in the 1980s. It is well-known in context with Riemannian manifolds and (combinatorial) graphs that roughly isometric spaces share many important properties: e.g., geometric properties (such as volume growth and isoperimetric inequalities), parabolicity/transience, Liouville-type theorems for harmonic functions of finite energy and many more. However, we stress that most of these connections also require additional (rather restrictive) conditions on the local geometry of the spaces. 

Some of our conclusions are reminiscent of this notion,
 but in fact our results go beyond this framework. For instance, the strong/weak Liouville property (i.e., all positive/bounded harmonic functions are constant) is not preserved under bi-Lipschitz maps in general \cite{lyons}. However, the equivalence holds true for our setting (see \cite[Lemma~6.46]{kn21}). In addition, we stress that we do not require any additional local conditions (e.g., bounded geometry). On the other hand, our results connect only two particular roughly isometric spaces $(\cG, \varrho_\eta)$ and $(\cV, \varrho_\cV)$ and not the whole equivalence class of roughly isometric weighted graphs or weighted metric graphs.
\end{remark}

By Lemma~\ref{lem:intrinsic_metrics}, each cable system having finite intrinsic size\footnote{Since by definition a cable system is a model of a weighted metric graph, the notion of intrinsic size  immediately extends to cable systems.} gives rise to an intrinsic metric $\varrho_\cV$ for $(\cV,m;b)$ using a simple restriction to vertices. It is natural to ask which intrinsic metrics on graphs can be obtained as restrictions of intrinsic metrics on weighted metric graphs. Due to the lack of space we omit the description of these results, which roughly speaking  state that {\em to construct an intrinsic metric on a graph $b$ over $(\cV,m)$ is almost equivalent to constructing a cable system}. Let us state only one result here (see Lemma~6.27 and Theorem~6.30 in \cite{kn21}). 

\begin{theorem}[\cite{kn21}]\label{th:metricDvsC}
Let $b$ be a locally finite, connected graph over $(\cV, m)$ equipped with a strongly intrinsic path metric $\varrho$. Assume also that 
$\varrho$ has finite jump size, 
\begin{align*}
s(\varrho) = \sup\{ \varrho(u,v)\,|\, u,v\in\cV, b(u,v)>0\}  < \infty.
\end{align*} 
Then there exists a weighted metric graph $(\cG,\mu,\nu)$ together with a model such that \eqref{eq:finsize} is satisfied, $m$ and $b$ have the form \eqref{eq:Mbndry} and \eqref{eq:Bbndry}, respectively, and, moreover, $\varrho$ coincides with the induced path metric $\varrho_\cV = \varrho_\eta|_{\cV\times\cV}$.
\end{theorem}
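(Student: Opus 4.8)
The plan is to construct the cable system edge-by-edge, choosing for each combinatorial edge $e = e_{u,v}$ of $\cG_b$ a length $|e|$ and edge weights $\mu(e), \nu(e)$ so that the three constraints are met simultaneously: (a) the intrinsic length $\eta(e) = |e|\sqrt{\mu(e)/\nu(e)}$ equals the prescribed $\varrho(u,v)$ (this handles \eqref{eq:Bbndry} combined with $\varrho$ being the induced metric, since $b(u,v) = \nu(e)/|e| \cdot (\text{number of parallel copies})$, but for a simple $\cG_b$ there is a single edge between $u$ and $v$, so $\nu(e)/|e| = b(u,v)$); (b) the vertex measure formula \eqref{eq:Mbndry}, $m(v) = \sum_{\vec e \in \vec\cE_v} |e|\mu(e)$, reproduces the given $m$; (c) the finite intrinsic size \eqref{eq:finsize}, $\sup_e \eta(e) < \infty$, which is exactly $s(\varrho) < \infty$ once (a) holds. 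So the real content is a bookkeeping problem: we have three scalar parameters per edge and two constraints per edge ($b(u,v)$ and $\eta(e)$) plus one constraint per vertex ($m(v)$), and we must check the vertex constraints are consistent with the edge constraints.

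First I would set, for each edge $e = e_{u,v}$, the intrinsic length $\eta(e) := \varrho(u,v)$; this is positive (local finiteness of $b$ forces $\varrho(u,v) > 0$) and bounded by $s(\varrho)$, giving \eqref{eq:finsize} for free. From $b(u,v) = \nu(e)/|e|$ and $\eta(e) = |e|\sqrt{\mu(e)/\nu(e)}$ we get a one-parameter family of solutions; parametrize by $|e|$ itself, so that $\nu(e) = b(u,v)|e|$ and $\mu(e) = \nu(e)\eta(e)^2/|e|^2 = b(u,v)\eta(e)^2/|e| = b(u,v)\varrho(u,v)^2/|e|$. Then $|e|\mu(e) = b(u,v)\varrho(u,v)^2$, which is \emph{independent of the choice of $|e|$}. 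Summing over $\vec\cE_v$, $\sum_{\vec e \in \vec\cE_v}|e|\mu(e) = \sum_{u \sim v} b(u,v)\varrho(u,v)^2$, and by the strong intrinsicality hypothesis \eqref{eq:intrinsicdef} (applied to the intrinsic weight $p$ generating $\varrho$, or directly to $\varrho$ if one uses that $\varrho(u,v) \le p(u,v)$ for $b(u,v) > 0$) this is $\le m(v)$, not necessarily $= m(v)$. The gap $m(v) - \sum_{u\sim v} b(u,v)\varrho(u,v)^2 \ge 0$ must be absorbed, and this is the one genuine obstacle.

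The standard fix — and the place where loops enter, which is why Theorem~\ref{th:loops} and the $\varrho$ case both allow non-simple models — is to attach at each vertex $v$ with a strict deficit a loop edge $\ell_v$ (or, if one prefers simple models, subdivide and attach a pendant structure, but a loop is cleanest). For a loop at $v$, both endpoints are $v$, so it contributes $b$-weight $0$ to $b(v,v) = 0$ automatically (vanishing diagonal is respected since a loop contributes to $b(v,v)$ via \eqref{eq:Bbndry} only through $u = v$, which is set to $0$), contributes $2|\ell_v|\mu(\ell_v)$ to $m(v)$ via \eqref{eq:Mbndry} (both orientations in $\vec\cE_v$), and contributes $|\ell_v|\sqrt{\mu(\ell_v)/\nu(\ell_v)}$ to the intrinsic size. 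I would choose $\mu(\ell_v), \nu(\ell_v), |\ell_v|$ so that $2|\ell_v|\mu(\ell_v) = m(v) - \sum_{u\sim v}b(u,v)\varrho(u,v)^2$ and $|\ell_v|\sqrt{\mu(\ell_v)/\nu(\ell_v)} \le s(\varrho)$ (e.g. fix the intrinsic length of $\ell_v$ to be small, say $\le 1$, then solve for the remaining freedom), exactly as in \cite[Rem.~2]{fo13}. One must verify that adding loops does not change $\varrho_\cV = \varrho_\eta|_{\cV\times\cV}$: a loop at $v$ only creates paths from $v$ to itself and cannot shorten any $\varrho_\eta$-distance between distinct vertices, while the non-loop edges were built to have $\eta(e) = \varrho(u,v)$ and $\varrho$ is already a path metric, so the infimum over $\cG$-paths of intrinsic length agrees with the infimum over combinatorial paths in $b$, which is $\varrho$. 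Finally I would collect: local finiteness of $\cG$ follows from local finiteness of $b$ (finitely many non-loop edges at each $v$, plus one loop), connectedness of $\cG$ from connectedness of $b$, and \eqref{eq:finsize} from $s(\varrho) < \infty$ together with the loop lengths being controlled; then \eqref{eq:Mbndry} and \eqref{eq:Bbndry} hold by construction and $\varrho = \varrho_\cV$ by the path-metric argument. The main obstacle, to reiterate, is precisely handling the inequality-versus-equality gap in \eqref{eq:intrinsicdef} via auxiliary (loop) edges while simultaneously keeping the intrinsic size finite and not perturbing the induced metric; everything else is the substitution in the first paragraph.
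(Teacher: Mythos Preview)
Your proposal is correct and follows essentially the construction the paper points to (the Folz-type cable system with auxiliary loops, cf.\ \cite{fo13} and \cite[Chap.~6.3]{kn21}); the paper itself does not prove the theorem in the text but defers to \cite[Lemma~6.27, Theorem~6.30]{kn21}. Your edge-by-edge parametrization leading to $|e|\mu(e)=b(u,v)\varrho(u,v)^2$, the absorption of the deficit $m(v)-\sum_{u\sim v}b(u,v)\varrho(u,v)^2\ge 0$ via loops, and the verification that $\varrho_\eta|_{\cV\times\cV}=\varrho$ (using that the path metric of a path metric is itself and that loops add only length) are exactly the expected steps.
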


\begin{remark}
It is hard to overestimate the role of intrinsic metrics in the progress achieved for weighted graph Laplacians during the last decade. Surprisingly, the above described procedure to construct an intrinsic metric for $(\cV,m;b)$ in fact provides a way to obtain all finite jump size intrinsic path metrics on $(\cV,m;b)$. Moreover, upon normalization assumptions on cable systems (e.g., restricting to weighted metric graphs with equal weights, i.e., $\mu=\nu$, and also assuming no multiple edges and that all loops have the same length $1$) the correspondence in Theorem~\ref{th:metricDvsC} becomes in a certain sense bijective  (see \cite[Theorem~6.34]{kn21}). \\
Let us mention that some versions of this result have been used earlier in \cite{fo13}, \cite{hua}.
\end{remark} 

\section{Applications}\label{sec:Applic}

Our main goal in this final section  is to demonstrate the established connections between graph Laplacians and metric graph Laplacians. 
We will describe some applications to the self-adjointness problem and to the problem of recurrence.
For further results as well as applications (Markovian uniqueness, spectral gap estimates, stochastic completeness etc.) we refer to \cite[Chap.~7--8]{kn21}.
  
\subsection{Self-adjointness}\label{ss:applSA}

The first mathematical problem arising in any quantum mechanical model is {\em self-adjointness} (see, e.g., \cite[Chap.~VIII.11]{RSI}), that is, usually a formal symmetric expression for the Hamiltonian has some natural domain of definition in a given Hilbert space (e.g., pre-minimally defined Laplacians) and then one has to verify that it gives rise to an (essentially) self-adjoint operator. Otherwise\footnote{Of course, one needs to check whether the corresponding symmetric operator has equal deficiency indices, which is always the case for Laplacians or, more generally,  for symmetric operators which are bounded from below or from above.}, there are infinitely many self-adjoint extensions (or restrictions in the maximally defined case) and one has to determine the right one which is the observable. 

There are several ways to introduce the notion of self-adjointness. For the Kirchhoff Laplacian as well as for the graph Laplacian (take into account the locally finite assumption) the self-adjointness means that the minimal Laplacian coincides with the maximal Laplacian in the corresponding $L^2$ space. 
On the other hand, considering the associated Schr\"odinger or wave equation, the self-adjointness actually means its $L^2$-solvability (see, e.g., \cite[\S 1.1]{shu92}). Perhaps, the most convenient way for us would be to define the self-adjointness via solutions to the Helmholz equation
\begin{align}\label{eq:helmholz}
\Delta u = \lambda u,\qquad \lambda\in \R.
\end{align} 
Since $\Delta$ is non-positive, the maximally defined operator is self-adjoint if and only if for some (and hence for all) $\lambda>0$ equation \eqref{eq:helmholz} admits a unique solution $u\in L^2(\cG;\mu)$, which is clearly identically zero in this case  (see, e.g., \cite[Theorem~X.26]{RSII}).
Recalling that, in the context of both manifolds and graphs, functions satisfying \eqref{eq:helmholz} are called $\lambda$-harmonic, the self-adjoint uniqueness can be seen as some kind of a Liouville-type property of $\cG$\footnote{Under the positivity of the spectral gap one can in fact replace $\lambda>0$ by $\lambda=0$ and hence in this case one is led to harmonic functions.} and this indicates its close connections with the geometry of the underlying metric space. We begin with the following result, which is widely known in the context of Riemannian manifolds.\footnote{M.P.~Gaffney~\cite{gaf55} noticed the importance of completeness of the manifold in question and the essential self-adjointness in this case was established later~\cite{roe} (see also~\cite{che}, \cite{str}).}.

\begin{theorem}\label{lem:Gaffney} 
Let $(\cG,\mu,\nu)$ be a weighted metric graph and let $\varrho_\eta$ be the corresponding intrinsic metric. 
If $(\cG,\varrho_\eta)$ is complete as a metric space, then the Kirchhoff Laplacian $\bH$ is self-adjoint.
\end{theorem}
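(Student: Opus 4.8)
The plan is to deduce self-adjointness of $\bH$ from the completeness of $(\cG,\varrho_\eta)$ by means of a Liouville-type argument for the Helmholtz equation $\Delta u = \lambda u$, $\lambda > 0$, exactly as set up in Section~\ref{ss:applSA}. Recall from \eqref{eq:H0*=HA} that $(\bH')^\ast = \bH$, so $\bH$ is self-adjoint if and only if $\bH'$ is essentially self-adjoint, which (since $\Delta$ is non-positive) is equivalent to the statement that for one, hence every, $\lambda > 0$ the only $u \in L^2(\cG;\mu) \cap \dom(\bH_{\max})$ satisfying $\Delta u = \lambda u$ together with the Kirchhoff conditions \eqref{eq:kirchhoff} is $u \equiv 0$. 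So I would fix such a $\lambda > 0$ and such a $u$ and aim to prove $u = 0$.

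First I would establish a Caccioppoli-type (energy) inequality using cutoff functions built from the intrinsic metric: for $R > 0$ and a reference point $o \in \cG$, let $\chi_R$ be a Lipschitz cutoff, equal to $1$ on the intrinsic ball $B_R(o;\varrho_\eta)$, supported in $B_{2R}(o;\varrho_\eta)$, with $\eta$-gradient bounded (in the sense $\nu|\nabla\chi_R|^2 \le C/R^2 \cdot \mu$ a.e., which is precisely the kind of bound the intrinsic weight $\eta = \sqrt{\mu/\nu}$ is designed to make available — cf. the space $\wh{\cD}_{\loc}$ in Section~\ref{ss:IntrMet.01}). Here completeness of $(\cG,\varrho_\eta)$ enters in an essential way: it guarantees (via Remark~\ref{rem:II.mr=lengthspace}, since the space is a locally compact length space) that the intrinsic balls $B_R(o;\varrho_\eta)$ are relatively compact, so that $\chi_R u \in H^1_c(\cG)$ and all integrations by parts are legitimate with no boundary terms at infinity. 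Pairing $\Delta u = \lambda u$ against $\chi_R^2 \bar u$, integrating by parts over $\cG$ — the Kirchhoff conditions \eqref{eq:kirchhoff} make the vertex contributions vanish — and using the elementary inequality $2|\langle \chi_R \nabla u, u \nabla \chi_R\rangle| \le \tfrac12\|\chi_R \nabla u\|^2 + 2\|u\nabla\chi_R\|^2$ yields
\[
\lambda \int_\cG \chi_R^2 |u|^2\, \mu(\rD x) + \tfrac12\int_\cG \chi_R^2 |\nabla u|^2\, \nu(\rD x) \le 2\int_\cG |u|^2 |\nabla \chi_R|^2\, \nu(\rD x) \le \frac{C}{R^2}\int_{B_{2R}(o;\varrho_\eta)} |u|^2\, \mu(\rD x).
\]
Since $u \in L^2(\cG;\mu)$, the right-hand side tends to $0$ as $R \to \infty$; letting $R \to \infty$ and using $\chi_R \to 1$ pointwise together with monotone convergence forces $\lambda \int_\cG |u|^2 \mu(\rD x) = 0$, hence $u \equiv 0$. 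This gives self-adjointness.

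I expect the main obstacle to be purely technical rather than conceptual: verifying carefully that the intrinsic cutoffs $\chi_R$ can be chosen inside $H^1_c(\cG)$ with the stated gradient bound — which amounts to checking that $x \mapsto \min(1, \max(0, (2R - \varrho_\eta(o,x))/R))$ is edgewise absolutely continuous, continuous at vertices, and satisfies $\nu|\nabla\chi_R|^2 \le \mu/R^2$ a.e. (this is immediate from the definition $\varrho_\eta$ via $\eta$-path lengths in \eqref{eq:LengthMetricPath}–\eqref{eq:compute_metric}, since $\varrho_\eta$ is $1$-Lipschitz with respect to the $\eta$-measure along edges, so its edgewise derivative is bounded by $\eta = \sqrt{\mu/\nu}$ a.e.), and that relative compactness of intrinsic balls indeed follows from metric completeness plus local compactness. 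Once these points are in place, the energy estimate and the limiting argument are routine. (One could alternatively phrase the whole argument in the abstract Dirichlet-form language, invoking the known fact that completeness of the intrinsic metric of a strongly local regular Dirichlet form implies $\gQ_D = \gQ_N$ and essential self-adjointness — here $\gQ_D$ is strongly local and regular by Lemma~\ref{lem:IntrMetWMG} and the discussion preceding it — but the direct cutoff computation above is self-contained and transparent.)
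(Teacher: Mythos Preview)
Your argument is correct, but it takes a different route from the paper's. The paper's proof is a two-line reduction to a black box: if $\bH$ were not self-adjoint, there would exist a nonzero real-valued $f\in\dom(\bH)$ with $\Delta f=f$; then $|f|$ is a nonnegative subharmonic function in $L^2(\cG;\mu)$, and Yau's $L^p$-Liouville theorem for strongly local Dirichlet forms (Sturm~\cite[Cor.~1(a)]{stu}) forces $f\equiv 0$ once $(\cG,\varrho_\eta)$ is complete. Your Caccioppoli/cutoff computation is essentially the \emph{proof} of that Liouville theorem, specialized to the equation $\Delta u=\lambda u$ on a weighted metric graph: the intrinsic cutoffs $\chi_R$ you build are exactly the ones underpinning Sturm's result, and your energy inequality is the standard integration-by-parts step in Yau's argument. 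So the two proofs are morally the same, with yours being self-contained (no appeal to \cite{stu}) at the cost of writing out the estimate by hand, while the paper's is shorter but imports the Liouville theorem wholesale. Your closing parenthetical already anticipates this equivalence.
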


In the context of metric graphs, the above result seems to be a folklore, however, it is not an easy task to find its proof in the literature. In fact, the above considerations enable us to provide a rather short one.

\begin{proof}
Assume that $\bH$ is not self-adjoint. Since the minimal Kirchhoff Laplacian $\bH^0 = \bH^\ast$ is nonnegative, this means that $\ker(\bH + \rI) \neq \{0\}$, that is, there is $0\neq f\in\dom(\bH)$ such that $\Delta f = f$ (see \cite[Theorem~X.26]{RSII}). Moreover, we can choose such an $f$ real-valued and hence $|f|$ is subharmonic. Moreover, $|f|\in L^2(\cG;\mu)$ since $f\in \dom(\bH)$. On the other hand, if $(\cG,\varrho_\eta)$ is complete as a metric space, then applying Yau's $L^p$-Liouville theorem \cite[Cor.~1(a)]{stu}, we conclude that $f\equiv 0$. This contradiction completes the proof.
\end{proof}

\begin{remark}
A few remarks are in order.
\begin{itemize}
\item[(i)]
Simple examples (e.g., $\cG$ is a path graph) show that the completeness w.r.t. $\varrho_\eta$ is not necessary. 
\item[(ii)]
By the Hopf--Rinow theorem (a metric graph $\cG$ equipped with $\varrho_\eta$ is a length space) completeness of $(\cG,\varrho_\eta)$ is equivalent to bounded compactness (compactness of distance balls), as well as to geodesic completeness.
\end{itemize}
\end{remark}

As an immediate corollary of Theorem~\ref{lem:Gaffney} and the above discussed connections, we obtain a version of the Gaffney theorem for graph Laplacians. 

\begin{theorem}[\cite{hkmw13}]\label{cor:GaffneyDiscr} 
Let $b$ be a locally finite graph over $(\cV,m)$ and let $\varrho$ be an intrinsic metric which generates the discrete topology on $\cV$. If $(\cV,\varrho)$ is complete as a metric space, then $\rh^0$ is self-adjoint and $\rh^0 = \rh$.
\end{theorem}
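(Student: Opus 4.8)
The plan is to derive Theorem~\ref{cor:GaffneyDiscr} as a corollary of the continuous Gaffney theorem (Theorem~\ref{lem:Gaffney}) via the cable system / intrinsic metric correspondence developed in Sections~\ref{sec:Bndry}--\ref{sec:IntrMetr}. First I would reduce to the case of an intrinsic \emph{path} metric with finite jump size. Indeed, if $\varrho$ is an arbitrary intrinsic metric generating the discrete topology, one replaces it by the strongly intrinsic path metric $\varrho_p$ induced by a suitable intrinsic weight $p$ (such $p$ exists for any locally finite $b$ over $(\cV,m)$, as noted after Definition~\ref{def:IntrMetrGr}); one checks that $\varrho_p \ge c\,\varrho$ on edges up to the usual comparison, so completeness of $(\cV,\varrho)$ together with local finiteness forces the relevant balls to be finite, and in particular the jump size $s(\varrho_p)$ may be taken finite after truncating $p$ (capping $p(u,v)$ by $\sqrt{m(v)}$ preserves the intrinsic inequality~\eqref{eq:intrinsicdef} and only decreases the metric, hence preserves completeness).

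Next I would invoke Theorem~\ref{th:metricDvsC}: given the locally finite connected graph $b$ over $(\cV,m)$ with its strongly intrinsic path metric $\varrho$ of finite jump size, there is a weighted metric graph $(\cG,\mu,\nu)$ with a model satisfying the finite-intrinsic-size condition~\eqref{eq:finsize}, whose associated discrete data~\eqref{eq:Mbndry},\eqref{eq:Bbndry} reproduce exactly $m$ and $b$, and such that the induced path metric $\varrho_\cV = \varrho_\eta|_{\cV\times\cV}$ coincides with $\varrho$. Then I would apply the completeness transfer in Lemma~\ref{lem:intrinsic_metrics}: $(\cG,\varrho_\eta)$ is complete as a metric space \emph{exactly when} $(\cV,\varrho_\cV)$ is complete. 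Since by hypothesis $(\cV,\varrho) = (\cV,\varrho_\cV)$ is complete, we conclude $(\cG,\varrho_\eta)$ is complete.

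Now Theorem~\ref{lem:Gaffney} applies to $(\cG,\mu,\nu)$ and yields that the Kirchhoff Laplacian $\bH$ is self-adjoint, i.e.\ $\bH^0 = \bH$. The final step is to descend this self-adjointness statement to the discrete side. Here one uses the boundary-operator relationship between $\bH$ and $\rh = \rh(\cG,\mu,\nu)$ alluded to in Section~\ref{sec:connection} and the remark following it: under the finite-intrinsic-size assumption, self-adjoint uniqueness for $\bH$ is equivalent to self-adjoint uniqueness for $\rh$ (this is precisely the ``Self-adjoint uniqueness'' item in the list of shared spectral properties, cf.\ \cite[\S~4]{EKMN}, \cite[Chap.~3]{kn21}). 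Since the data of $\rh$ are $(\cV,m;b)$, this gives $\rh^0 = \rh$, which is the assertion. The main obstacle I expect is the bookkeeping in the first step: verifying that one may pass from a general intrinsic metric inducing the discrete topology to a strongly intrinsic \emph{path} metric of finite jump size \emph{without destroying completeness}. This requires combining local finiteness (so that only finitely many edges emanate from each vertex and balls of finite radius are locally controlled) with the fact that truncation of the weight function decreases distances; one has to argue that a Cauchy sequence in the truncated path metric is eventually constant or converges, using that $\varrho$ already generated the discrete topology and was complete. The remaining steps are essentially citations of Theorems~\ref{th:metricDvsC}, \ref{lem:Gaffney} and Lemma~\ref{lem:intrinsic_metrics} together with the boundary-operator equivalence.
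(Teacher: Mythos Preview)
Your overall architecture matches the paper's proof exactly: build a cable system via Theorem~\ref{th:metricDvsC}, transfer completeness by Lemma~\ref{lem:intrinsic_metrics}, apply Theorem~\ref{lem:Gaffney}, and then use the equivalence of self-adjoint uniqueness for $\bH$ and $\rh$ from \cite[\S~4]{EKMN}, \cite[Chap.~3]{kn21}. The paper simply defers the ``bookkeeping'' you flagged to \cite[Chap.~7.1]{kn21}.

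However, your proposed reduction step contains genuine errors. First, starting from \emph{some} intrinsic weight $p$ (whose existence is quoted after Definition~\ref{def:IntrMetrGr}) gives you no comparison with the given $\varrho$; the inequality ``$\varrho_p \ge c\,\varrho$'' is false in general. The correct move is to take $p(u,v):=\varrho(u,v)$ for $b(u,v)>0$: this \emph{is} an intrinsic weight precisely because $\varrho$ satisfies~\eqref{eq:intrinsicdef}, and then the triangle inequality for $\varrho$ gives $\varrho_p \ge \varrho$, so completeness passes upward. Second, your truncation argument is backwards in two ways: capping by $\sqrt{m(v)}$ is neither symmetric nor a uniform bound (so it does not yield finite jump size), and more importantly ``decreasing the metric preserves completeness'' is false --- a smaller metric has \emph{more} Cauchy sequences, not fewer. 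The right truncation is $p_C(u,v):=\min(\varrho(u,v),C)$ for a fixed constant $C>0$; this is symmetric, still intrinsic, and has jump size $\le C$. Completeness survives not because $\varrho_{p_C}\le\varrho_p$, but because $\varrho_{p_C}$ and $\varrho_p$ \emph{coincide} on pairs at $\varrho_{p_C}$-distance $<C$ (any witnessing path then has all edge weights $<C$, so $p_C=p$ along it). Hence a $\varrho_{p_C}$-Cauchy sequence is eventually $\varrho_p$-Cauchy, hence $\varrho$-Cauchy, hence --- since $\varrho$ generates the discrete topology and is complete --- eventually constant. With this fix your sketch goes through and matches the paper.
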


\begin{proof}
Let us only sketch the proof (missing details can be found in~\cite[Chap.~7.1]{kn21}). By Theorem~\ref{th:metricDvsC}, there is a cable system for $(\cV,m;b)$. Moreover, the corresponding Kirchhoff Laplacian $\bH$ is self-adjoint if and only if so is $\rh$ (see \cite[\S~4]{EKMN}, \cite[Theorem~3.1(i)]{kn21}). Taking into account Lemma~\ref{lem:intrinsic_metrics} and applying Theorem~\ref{lem:Gaffney}, we complete the proof.  
\end{proof}

\begin{remark}
A few remarks are in order.
\begin{itemize}
\item[(i)]
Theorem~\ref{cor:GaffneyDiscr} was first established in \cite[Theorem~2]{hkmw13}. 
\item[(ii)]
Both Theorem~\ref{lem:Gaffney} and Theorem~\ref{cor:GaffneyDiscr}) are not optimal. For instance, Theorem~\ref{cor:GaffneyDiscr} does not imply the self-adjointness of the combinatorial Laplacian $L_{\rm comb}$ when it is unbounded (see \cite{jor}, \cite[Theorem~6]{kl12}). However, Theorems~\ref{lem:Gaffney} and~\ref{cor:GaffneyDiscr} enjoy a certain stability property under additive perturbations, which preserve semiboundedness (\cite[Theorem~2.16]{gks15}, \cite{kmn21}).
\item[(iii)] We refer for further results and details to \cite{kmn21}, \cite[Chap.~7.1]{kn21}, and \cite{schm20}.
\end{itemize}
\end{remark}

\subsection{Recurrence and transience}\label{ss:applRecur}

Recurrence of a random walk/Brownian motion means that a particle returns to its initial position infinitely often (see, e.g., \cite{fuk10} for a detailed definition). In fact, recurrence appears (quite often under different names) in many different areas of mathematics and mathematical physics and enjoys deep connections to various important problems (e.g., the type problem for simply connected Riemann surfaces). 

The famous theorem of G.~P\'olya states that the simple random walk on $\Z^d$ is recurrent if and only if either $d=1$ or $d=2$. 
Intuitively one may explain recurrence of a random walk/Brownian motion as insufficiency of volume in the state space (volume of a ball of radius $R$ in $\Z^d$ or $\R^d$ grows faster as $R\to \infty$ for larger $d$). The qualitative form of this heuristic statement in the manifold context has a venerable history (we refer to the excellent exposition of A.~Grigor'yan~\cite{gri99} for further details) and in the case of complete Riemannian manifolds  it was proved in the 1980s independently by L.~Karp, N.Th.~Varopoulos, and A.~Grigor'yan (see \cite[Theorem~7.3]{gri99}) that 
\[
\int^\infty \frac{r\, dr}{{\vol}(B_r(x))} = \infty
\]
guarantees recurrence. Moreover, this condition is close to be necessary. This result was extended to strongly local Dirichlet forms by K.-T.~Sturm~\cite{stu} and hence it also holds in the setting of weighted metric graphs. Again, using the obtained connections between metric graph and weighted graph Laplacians, we can proceed as in the previous subsection and establish the corresponding volume growth test for weighted graph Laplacians, which was originally obtained by B.~Hua and M.~Keller~\cite{huke14}. Due to the lack of space we only refer to \cite[Chap.~7.4]{kn21} for further details. 

We would like to finish this article by reflecting on another interesting topic. Perhaps, the most studied class of graphs are Cayley graphs of finitely generated groups (Example~\ref{ex:Cayley}). Random walks on groups is a classical and very rich subject (the literature is enormous  and we only refer to the classic text \cite{woe}). 
 Recall that a group $\mG$ is called {\em recurrent} if the simple random walk on its Cayley graph $\cC(\mG,S)$ is recurrent for some (and hence for all) $S$. The classification of recurrent groups was accomplished in the 1980s by proving the famous {\em Kesten conjecture}. It is a combination of two seminal theorems -- relationship between decay of return probabilities and growth in groups 
established by N.Th.~Varopoulos and the characterization of groups of polynomial growth by M.~Gromov (see, e.g., \cite[Chap.~VI.6]{var}, \cite[Theorem~3.24]{woe}). 

\begin{theorem}[N.Th.~Varopoulos]\label{th:VaropRecur}
$\mG$ is recurrent if and only if $\mG$ contains a finite index subgroup isomorphic either to $\Z$ or to $\Z^2$. 
\end{theorem}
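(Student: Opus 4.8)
The plan is to prove the Kesten conjecture (Theorem~\ref{th:VaropRecur}) by combining the volume growth characterization of recurrence for graph Laplacians with Gromov's theorem on groups of polynomial growth. First I would fix a finitely generated group $\mG$ with a finite symmetric generating set $S$ (not containing the identity) and consider its Cayley graph $\cC(\mG,S)$ equipped with the combinatorial structure $m\equiv\id$, $b(u,v)=1$ iff $u\sim v$. Since $\cC(\mG,S)$ is regular of degree $\#S$, the normalized and combinatorial Laplacians differ only by the constant factor $\#S$, so recurrence of the simple random walk is equivalent to recurrence of the graph Laplacian $\rh$; moreover, since the graph has bounded geometry, the combinatorial distance $\varrho_{\rm comb}$ is equivalent to an intrinsic metric (Remark~\ref{rem:combIntrMetr}), so the volume growth test of Hua--Keller applies verbatim with balls measured in the word metric.

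Next I would translate recurrence into a growth condition. The volume of the ball $B_r$ in $\cC(\mG,S)$ is (up to the constant $m\equiv 1$) the number of group elements of word length at most $r$; call this $\gamma(r)$. The volume growth test (see \cite[Chap.~7.4]{kn21}, originally \cite{huke14}) combined with Sturm's result \cite[Theorem~7.3]{gri99} says that
\[
\sum_{r\ge 1}\frac{r}{\gamma(r)} = \infty
\]
implies recurrence, and the converse-type statement (together with the structure theory of groups) shows recurrence forces this divergence. So the problem reduces to: the above series diverges precisely when $\gamma(r)$ grows at most quadratically. The growth rate $\gamma(r)$ of a finitely generated group is, by standard facts, either polynomial of some integer degree $d$ or superpolynomial; and it is well known that a finite index subgroup has the same growth type as the ambient group, with $\Z$ having linear growth ($d=1$) and $\Z^2$ quadratic growth ($d=2$).

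Then the heart of the argument is an application of \textbf{Gromov's polynomial growth theorem}: a finitely generated group has polynomial growth if and only if it is virtually nilpotent, and in that case the degree $d$ is the integer $\sum_i i\cdot\mathrm{rank}(\mathfrak{g}_i/\mathfrak{g}_{i+1})$ computed from the lower central series of a nilpotent finite index subgroup (the Bass--Guivarc'h formula). The series $\sum r/\gamma(r)$ diverges exactly when $\gamma(r)\asymp r^d$ with $d\le 2$; since $d$ is a non-negative integer, $d\in\{0,1,2\}$, and one checks from the Bass--Guivarc'h formula that $d\le 2$ forces the nilpotent subgroup to be abelian of rank at most $2$, i.e.\ virtually $\{e\}$, $\Z$, or $\Z^2$. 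A group with a finite index subgroup isomorphic to $\Z$ or $\Z^2$ (or trivial, but finite groups are trivially recurrent and can be absorbed into the $\Z$ case or handled separately) therefore has $d\le 2$ and is recurrent; conversely recurrence forces the divergence of the series, hence polynomial growth of degree $\le 2$, hence by Gromov plus Bass--Guivarc'h a finite index copy of $\Z$ or $\Z^2$.

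The main obstacle is entirely on the group theory side rather than the Dirichlet form side: the implication ``recurrent $\Rightarrow$ virtually $\Z$ or $\Z^2$'' rests on Gromov's theorem, which is genuinely deep and cannot be circumvented by the metric/continuous-graph machinery developed above — that machinery only cleanly supplies the equivalence ``recurrent $\iff$ $\sum r/\gamma(r)=\infty$'' via the volume growth test. The remaining work, showing that the integer $d$ satisfies $d\le 2$ iff the virtually nilpotent group is virtually $\Z$ or $\Z^2$, is a short computation with the Bass--Guivarc'h formula, and the contribution of the metric graph viewpoint here is precisely that it lets one import Sturm's volume growth criterion and the Hua--Keller test without any additional local regularity hypotheses, so that the argument applies to an arbitrary Cayley graph.
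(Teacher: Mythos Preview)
The paper does not actually prove Theorem~\ref{th:VaropRecur}: it is stated as a classical result and attributed to Varopoulos (combined with Gromov's theorem), with references to \cite[Chap.~VI.6]{var} and \cite[Theorem~3.24]{woe}. So there is no ``paper's own proof'' to compare against; the theorem functions in the text as a black box that is then \emph{applied} (via Theorem~\ref{cor:RWonGroup}) to obtain Corollary~\ref{th:MGrecur}.

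That said, your sketch contains a genuine gap. The volume growth test you invoke (Sturm/Grigor'yan, Hua--Keller) is a \emph{one-sided} criterion: divergence of $\sum_r r/\gamma(r)$ implies recurrence, but the converse is false in general --- the paper itself says the condition is only ``close to be necessary''. You slide past this when you write that ``the converse-type statement \dots shows recurrence forces this divergence'' and later that the metric-graph machinery ``cleanly supplies the equivalence''. It does not. The hard direction of Theorem~\ref{th:VaropRecur} is precisely that growth of degree $d\ge 3$ forces \emph{transience}, and this is not a consequence of the volume growth test; it is Varopoulos's actual analytic input, established via return-probability decay estimates $p_n(o,o)\lesssim n^{-d/2}$ (equivalently, via isoperimetric/Nash-type inequalities on the Cayley graph). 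Without that ingredient your argument only proves the easy direction (virtually $\Z$ or $\Z^2$ $\Rightarrow$ recurrent), and the appeal to Gromov and Bass--Guivarc'h, while correct in spirit for classifying the low-degree cases, never gets triggered for the converse because you have not shown that recurrence constrains the growth degree.
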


It turns out that the problem of recurrence on weighted metric graphs can be reduced to the study of recurrence of random walks on groups (see \cite[Theorem~7.49]{kn21}). Let $(\cG_C,\mu,\nu)$ be a weighted metric graph where $\cG_C = \cC(\mG,S)$ is a Cayley graph of a finitely generated group $\mG$. Also, let $\bH_D$ be the corresponding Dirichlet Laplacian. Define
\begin{align}\label{eq:RWmuG}
b_\nu(u,v) =  \begin{cases} \frac{\nu(e_{u,v})}{|e_{u,v}|}, & u^{-1}v \in S,\\ \ \ 0, & u^{-1}v\notin S,\end{cases}\qquad u,v\in \mG.
\end{align} 

\begin{theorem}\label{cor:RWonGroup}
 The heat semigroup $(\E^{-t \bH_D})_{t>0}$ is recurrent if and only if the discrete time random walk on $\mG$ with transition probabilities 
\begin{align}\label{eq:PnuRW}
p_\nu(u,v) = P(X_{n+1} = v\,|\, X_{n} = u) = \frac{b_\nu(u,v)}{\sum_{g\in S} b_\nu (u,ug)}, \qquad u,v \in \mG, 
\end{align}
is recurrent.
\end{theorem}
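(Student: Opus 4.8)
The plan is to reduce the recurrence of the heat semigroup $(\E^{-t\bH_D})_{t>0}$ on the weighted metric graph $(\cG_C,\mu,\nu)$ to the recurrence of a discrete graph Laplacian via the cable-system correspondence, and then identify that discrete object with (a scalar multiple of) the generator of the random walk with transition probabilities $p_\nu$. First I would invoke the general connection developed in Section~\ref{sec:connection}: since $\cG_C$ is a Cayley graph it is locally finite, and because $S$ is finite while $\mu,\nu$ are genuine edge weights on an equilateral-type model (all edges are edges of $\cG_C$), one checks that the model has finite intrinsic size~\eqref{eq:finsize} --- indeed $\intrL^\ast(\cE) = \sup_{e}|e|\sqrt{\mu(e)/\nu(e)} < \infty$ provided the weights are suitably normalized, which is part of the standing setup here. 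Hence, by Lemma~\ref{lem:intrinsic_metrics} and the facts collected just after it, the Dirichlet Laplacian $\bH_D$ on $(\cG_C,\mu,\nu)$ and the discrete Laplacian $\rh(\cG_C,\mu,\nu)$ with weights $b=b_\nu$ (this is exactly~\eqref{eq:Bbndry} written out for a Cayley graph, hence~\eqref{eq:RWmuG}) and $m$ as in~\eqref{eq:Mbndry} share their parabolic properties; in particular, recurrence of $(\E^{-t\bH_D})_{t>0}$ is equivalent to recurrence of the discrete Dirichlet form $\gq_D$ associated with $(\cV,m;b_\nu)$. This is precisely the ``Recurrence/transience'' item in the list in Section~\ref{sec:connection}, and I would cite \cite[Chap.~4.5]{kn21} for it.

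The second step is purely discrete: to translate recurrence of the graph $(\cV,m;b_\nu)$ into recurrence of the discrete-time Markov chain on $\mG$ with transition probabilities $p_\nu$ given by~\eqref{eq:PnuRW}. The key point is that recurrence of a Dirichlet form on a graph $b$ over $(\cV,m)$ --- equivalently, recurrence of the continuous-time minimal process --- does not depend on the measure $m$, only on the conductances $b$; and the discrete-time jump chain embedded in that process has exactly transition probabilities $b_\nu(u,v)/\sum_{w} b_\nu(u,w) = b_\nu(u,v)/\sum_{g\in S} b_\nu(u,ug) = p_\nu(u,v)$. Recurrence of the continuous-time chain and of its embedded discrete-time jump chain coincide (the local finiteness of $b_\nu$, guaranteed by $\#S<\infty$, ensures the jump chain is well defined and that no explosion issues arise), so recurrence of $(\cV,m;b_\nu)$ is equivalent to recurrence of the chain $(X_n)$ with kernel $p_\nu$. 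I would phrase this via the standard electrical-network characterization: the network with conductances $b_\nu$ is recurrent iff the effective resistance from a fixed vertex to infinity is infinite, and this is manifestly a statement about $b_\nu$ alone, hence about $p_\nu$ alone.

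Chaining these two equivalences gives the theorem. The main obstacle --- really the only non-bookkeeping point --- is making sure the hypotheses of the metric-to-discrete transfer are genuinely in force here: one must confirm the model of $(\cG_C,\mu,\nu)$ has finite intrinsic size so that Lemma~\ref{lem:intrinsic_metrics} and the recurrence-transfer result apply, and one must be careful that ``recurrence of the heat semigroup $(\E^{-t\bH_D})$'' is recurrence of the strongly local Dirichlet form $\gQ_D$ in Sturm's sense, which is the form to which the general correspondence of Section~\ref{sec:connection} is calibrated. Everything else --- the identification of $b_\nu$ with~\eqref{eq:Bbndry}, the independence of recurrence from $m$, and the equality of recurrence for a chain and its embedded jump chain --- is routine. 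I would close by remarking that combined with Theorem~\ref{th:VaropRecur} this yields an explicit criterion: for $\mG$ a finitely generated group, $(\E^{-t\bH_D})_{t>0}$ on $(\cC(\mG,S),\mu,\nu)$ is recurrent iff $\mG$ has a finite-index subgroup isomorphic to $\Z$ or $\Z^2$, at least when the weights $\nu$ are comparable to a constant along the generators so that $p_\nu$ is comparable to the simple random walk (this uses that recurrence is stable under bounded perturbations of conductances, cf.\ the electrical-network picture again).
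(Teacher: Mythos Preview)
Your overall strategy is correct and matches the paper's framework: reduce recurrence of $\gQ_D$ on $(\cG_C,\mu,\nu)$ to recurrence of the discrete energy form with conductances $b_\nu$ via the cable-system correspondence, then use that recurrence of a graph Dirichlet form is a property of $b$ alone (electrical-network characterization), hence equivalent to recurrence of the jump chain with kernel $p_\nu$. The paper does not spell out a proof here but points to \cite[Theorem~7.49]{kn21}, and what you wrote is the natural argument.

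There is, however, one genuine wobble in your justification. You claim that finite intrinsic size~\eqref{eq:finsize} holds because $S$ is finite and the weights are ``suitably normalized, which is part of the standing setup here''. That is not correct: the theorem imposes no boundedness hypothesis on $\mu,\nu$ or $|e|$, and even though $\#S<\infty$, the Cayley graph has infinitely many edges over which $|e|\sqrt{\mu(e)/\nu(e)}$ may well be unbounded. So the hypothesis you try to verify simply need not hold. The good news is that for \emph{recurrence} specifically you do not need it. The transfer goes through directly at the level of energy forms: for any $f\in H^1_c(\cG)$, its vertex restriction $\f=f|_\cV$ lies in $C_c(\cV)$ and satisfies $\gq_{b_\nu}[\f]\le \gQ[f]$ (affine interpolation minimizes Dirichlet energy on each edge), while conversely any $\f\in C_c(\cV)$ extends affinely to $f\in H^1_c(\cG)$ with $\gQ[f]=\gq_{b_\nu}[\f]$. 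Feeding this into the capacity/null-sequence characterization of recurrence gives the equivalence with no intrinsic-size assumption. You should replace your appeal to Lemma~\ref{lem:intrinsic_metrics} and ``the facts collected just after it'' by this direct energy comparison (or cite \cite[Chap.~4.5]{kn21} without invoking~\eqref{eq:finsize}).

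A smaller point: in your closing remark you require $\nu$ to be two-sidedly comparable to a constant in order to deduce the group-theoretic criterion. The paper's Corollary~\ref{th:MGrecur} is sharper --- only the one-sided bounds~\eqref{eq:recur01} or~\eqref{eq:recur02} are needed, because recurrence (resp.\ transience) is monotone under increasing (resp.\ decreasing) conductances.
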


Combining this result with Theorem~\ref{th:VaropRecur}, we arrive at the following result:

\begin{corollary}\label{th:MGrecur}
Assume the conditions of Theorem~\ref{cor:RWonGroup}.
\begin{itemize}
\item[(i)]
If $\mG$  contains a finite index subgroup isomorphic either to $\Z$ or to $\Z^2$ and 
the edge weight $\nu$ satisfies
\begin{align}\label{eq:recur01}
\sup_{e\in\cE}\frac{\nu(e)}{|e|} <\infty,
\end{align}
then the heat semigroup  $(\E^{-t \bH_D})_{t>0}$ is recurrent.
\item[(ii)]
If $\mG$ is transient (i.e., $\mG$ does not contain a finite index subgroup isomorphic either to $\Z$ or to $\Z^2$) and 
the edge weight $\nu$ satisfies
\begin{align}\label{eq:recur02}
\inf_{e\in\cE}\frac{\nu(e)}{|e|} > 0,
\end{align}
then the heat semigroup  $(\E^{-t \bH_D})_{t>0}$ is transient.
\end{itemize}
\end{corollary}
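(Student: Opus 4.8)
The plan is to derive Corollary~\ref{th:MGrecur} from Theorem~\ref{cor:RWonGroup} and Theorem~\ref{th:VaropRecur} by a comparison argument for the discrete-time random walks attached to different edge-weight choices. Fix the Cayley graph $\cG_C = \cC(\mG,S)$ and recall that on it the combinatorial/normalized situation corresponds to the uniform weight $b_1(u,v) = 1$ when $u^{-1}v\in S$; by Theorem~\ref{th:VaropRecur}, the simple random walk $p_1$ is recurrent precisely in case (i) and transient precisely in case (ii). So the whole task reduces to showing that the random walk $p_\nu$ from \eqref{eq:PnuRW} has the same recurrence type as $p_1$ under the stated one-sided bounds on $\nu(e)/|e|$.

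First I would observe that $b_\nu(u,v) = \nu(e_{u,v})/|e_{u,v}|$ is, up to the harmless factor coming from the choice of model, a \emph{reversible} edge weight for the walk $p_\nu$ in the sense that $p_\nu(u,v) = b_\nu(u,v)/c_\nu(u)$ with $c_\nu(u) = \sum_{g\in S} b_\nu(u,ug)$, exactly as $p_1(u,v) = b_1(u,v)/\deg(u)$ is reversible with respect to the counting weight. Recurrence/transience of a reversible Markov chain is, by the classical electrical-network characterization (Nash-Williams / Royden), monotone in the conductances: enlarging every edge conductance cannot turn a transient network transient... more precisely, if $b_\nu \le C\, b_1$ edgewise then the $b_\nu$-network is ``smaller'' and recurrence of $b_1$ forces recurrence of $b_\nu$, while if $b_\nu \ge c\, b_1$ edgewise then transience of $b_1$ forces transience of $b_\nu$. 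Since $S$ is finite and $\deg \equiv \#S$ on a Cayley graph, the hypothesis \eqref{eq:recur01}, $\sup_e \nu(e)/|e| < \infty$, gives exactly $b_\nu(u,v) \le C\, b_1(u,v)$ for all edges, and \eqref{eq:recur02}, $\inf_e \nu(e)/|e| > 0$, gives $b_\nu(u,v) \ge c\, b_1(u,v)$ for all edges. Hence in case (i) recurrence of $p_1$ (Theorem~\ref{th:VaropRecur}) yields recurrence of $p_\nu$, and in case (ii) transience of $p_1$ yields transience of $p_\nu$.

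Finally I would feed these conclusions back through Theorem~\ref{cor:RWonGroup}, which asserts that the heat semigroup $(\E^{-t\bH_D})_{t>0}$ on the weighted metric graph $(\cG_C,\mu,\nu)$ is recurrent if and only if the discrete-time walk with transition probabilities $p_\nu$ is recurrent. This immediately converts ``$p_\nu$ recurrent'' in case (i) into recurrence of $(\E^{-t\bH_D})_{t>0}$, and ``$p_\nu$ transient'' in case (ii) into transience of $(\E^{-t\bH_D})_{t>0}$, which is precisely the statement of the corollary. Note that no assumption on $\mu$ or on the edge lengths $|e|$ is needed beyond what is already built into Theorem~\ref{cor:RWonGroup}, because the comparison only sees the ratios $\nu(e)/|e|$.

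The main obstacle is making the monotonicity step fully rigorous on an \emph{infinite} graph: the electrical-network comparison principle for recurrence is standard, but one should be careful that the two networks share the same vertex set and edge set (which they do, both being built on $\cC(\mG,S)$), that $\#S < \infty$ guarantees local finiteness and that the weighted degrees $c_\nu(u)$ are comparable to $c_1(u) \equiv \#S$ (again immediate from the one-sided bounds), so that ``recurrence of the network'' and ``recurrence of the associated discrete-time walk'' coincide. Once these bookkeeping points are in place, the argument is essentially the Nash-Williams/Rayleigh monotonicity principle applied twice, in the two opposite directions corresponding to \eqref{eq:recur01} and \eqref{eq:recur02}.
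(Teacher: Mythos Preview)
Your proposal is correct and is exactly the natural way to flesh out what the paper leaves implicit: the paper does not give a proof of the corollary at all, merely stating that it follows by ``combining'' Theorem~\ref{cor:RWonGroup} with Theorem~\ref{th:VaropRecur}. Your Rayleigh/Nash--Williams monotonicity comparison between the network $b_\nu$ and the unit-conductance network $b_1$ on $\cC(\mG,S)$ is precisely the missing link, and the one-sided bounds \eqref{eq:recur01}, \eqref{eq:recur02} translate into the required edgewise inequalities $b_\nu \le C b_1$, respectively $b_\nu \ge c b_1$, so the argument goes through cleanly.
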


In fact, the above result have numerous consequences and actually can be improved. Let us finish by its applications to ultracontractivity estimates. To simplify the exposition we restrict now to unweighted metric graphs.

\begin{theorem}[\cite{EKMN,kn21}]\label{th:UltraGroup}
Assume the conditions of Theorem~\ref{cor:RWonGroup} and let also $\mu = \nu\equiv \id$.
Suppose that $\mG$ is not recurrent and the edge lengths satisfy
\begin{align}\label{eq:munuUltra1}
\sup_{e\in\cE} |e|  < \infty. 
\end{align}
Then $(\E^{-t\bH_D})_{t>0}$ is ultracontractive and, moreover, 
\begin{itemize}
\item[(i)]
If $\gamma_\mG(n) \approx n^N$
as $n\to \infty$ with some $N\in \Z_{\ge 3}$, then\footnote{Here $\gamma_\mG\colon \Z_{\ge 0} \to \Z_{>0}$ is the growth function defined by $\gamma_\mG(n) = \#\{ g\in \mG\, |\, \varrho_{\rm comb}(g,o) \le n\}$, where $o$ is the identity element of $\mG$.} 
\begin{align}\label{eq:UltraD/2}
\|\E^{-t\bH_D}\|_{1\to \infty} \le C_N t^{-N/2},\qquad t>0.
\end{align}
\item[(ii)]
If $\mG$ is not virtually nilpotent (i.e., $\gamma_\mG$ has superpolynomial growth\footnote{This means that for each $N>0$ there is $c>0$ such that $\gamma_\mG(n) \ge cn^N$ for all large $n$.}), then \eqref{eq:UltraD/2} holds true for all $N>2$.
\end{itemize}
\end{theorem}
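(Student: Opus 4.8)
The plan is to deduce Theorem~\ref{th:UltraGroup} from Theorem~\ref{cor:RWonGroup}, Corollary~\ref{th:MGrecur}, and the well-developed theory relating on-diagonal heat kernel decay to volume growth, using the quasi-isometry machinery of Section~\ref{sec:IntrMetr}. First I would set up the comparison: since $\mu=\nu\equiv\id$, the intrinsic metric $\varrho_\eta$ on $\cG_C$ is just the length metric $\varrho_0$, and under \eqref{eq:munuUltra1} the edge lengths are uniformly bounded, so by Lemma~\ref{lem:RoughIso} the map $\varphi\colon(\cV,\varrho_\cV)\to(\cG_C,\varrho_\eta)$ is a quasi-isometry; moreover $\varrho_\cV$ is bi-Lipschitz equivalent to the combinatorial (word) metric $\varrho_{\mathrm{comb}}$ on $\mG$ because edge lengths are bounded above and below (below by local finiteness combined with $\sup|e|<\infty$ — actually one needs a lower bound, which in the unweighted equilateral-type regime is automatic from the model, or can be arranged; this is a point to be careful about). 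Consequently the volume function of $(\cG_C,\mu)$ grows like the group growth function $\gamma_\mG$.

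Next I would invoke the volume growth $\Leftrightarrow$ ultracontractivity correspondence. Since $\mG$ is not recurrent, by Theorem~\ref{cor:RWonGroup} and Corollary~\ref{th:MGrecur} the heat semigroup $(\E^{-t\bH_D})_{t>0}$ is transient, and one has the standard Nash/Faber--Krahn-type equivalence (in the strongly local Dirichlet form setting, following Sturm~\cite{stu} and Coulhon--Grigor'yan-type arguments, cf.\ \cite[\S~5.2]{EKMN} and \cite[Chap.~4.8]{kn21}): a uniform Faber--Krahn inequality with exponent $N$, or equivalently volume lower bound $\vol(B_r)\gtrsim r^N$ together with a matching upper bound and a Poincar\'e/volume-doubling property on the relevant scale, yields $\|\E^{-t\bH_D}\|_{1\to\infty}\le C_N t^{-N/2}$. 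For part~(i), if $\gamma_\mG(n)\approx n^N$ with $N\ge 3$, then by Gromov's theorem $\mG$ is virtually nilpotent and by Bass--Guivarc'h the growth is \emph{exactly} polynomial of integer degree $N$, with two-sided bounds $c n^N\le \gamma_\mG(n)\le C n^N$; transferring through the quasi-isometry gives $c' r^N\le \vol(B_r(x))\le C' r^N$ uniformly in $x$ (here one uses that $\mG$ acts transitively, so the metric graph is ``homogeneous at infinity''), and this two-sided polynomial volume control is exactly what drives the on-diagonal bound \eqref{eq:UltraD/2} for all $t>0$ (small $t$ handled by the bounded-geometry local structure of the metric graph, large $t$ by the volume growth). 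For part~(ii), if $\mG$ is not virtually nilpotent, then $\gamma_\mG$ is superpolynomial, so for every $N>0$ one has $\vol(B_r(x))\gtrsim r^N$ for large $r$, and the same Faber--Krahn/Nash argument gives \eqref{eq:UltraD/2} for every $N>2$ (the restriction $N>2$ rather than $N\ge 0$ is the usual threshold: transience forces $N>2$ in the heat-kernel decay exponent, consistent with $\int^\infty r\,dr/\vol(B_r)<\infty$).

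The main obstacle I expect is making the transfer of the ultracontractivity estimate across the quasi-isometry rigorous and \emph{quantitative} with the correct exponent. Quasi-isometries preserve volume growth type and transience, but on-diagonal heat kernel \emph{exponents} are in general only preserved under additional local regularity hypotheses (bounded geometry, local Poincar\'e inequalities) — Remark~\ref{rem:RoughIso} explicitly warns that ``most of these connections also require additional (rather restrictive) conditions on the local geometry.'' Here those conditions are available: the metric graph $\cG_C$ with $\mu=\nu\equiv\id$ and $\sup|e|<\infty$ has uniformly bounded geometry (bounded vertex degree from the finite symmetric generating set $S$, edge lengths bounded above, and — modulo the lower-bound caveat above — bounded below), so the local Poincar\'e and volume-doubling properties needed for the Grigor'yan--Coulhon characterization hold at all scales. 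The genuinely technical step is thus assembling these ingredients into the precise Faber--Krahn inequality and running the Nash-type iteration in the strongly local Dirichlet form framework for $\bH_D$; I would cite \cite{stu}, \cite[\S~5.2]{EKMN}, \cite[Chap.~4.8]{kn21} for this rather than reprove it, and restrict (as the statement does) to the unweighted case precisely to keep the local geometry uniform. A secondary subtlety is the sharpness of the exponent in part~(i): one must use the \emph{integer} value of $N$ and the two-sided polynomial bound coming from Bass--Guivarc'h, since a merely one-sided volume estimate would only give an upper bound of the form $t^{-N/2+\varepsilon}$ or would not pin down $N$.
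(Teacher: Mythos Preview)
The paper does not prove this theorem in the text; it is quoted from \cite{EKMN} and \cite{kn21}. Nonetheless, the paper's own framework (Section~\ref{sec:connection}) makes clear what the intended route is, and it differs from yours at exactly the point where your sketch has a genuine gap.

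The gap is the one you yourself flag and then set aside: to compare $\varrho_\cV$ with the word metric $\varrho_{\mathrm{comb}}$ bi-Lipschitzly --- and hence to transfer the growth function $\gamma_\mG$ to volume growth of metric balls in $(\cG_C,\varrho_0)$ --- you need $\inf_{e\in\cE}|e|>0$. The theorem assumes only $\sup_{e\in\cE}|e|<\infty$, and local finiteness together with this upper bound does \emph{not} manufacture a lower bound (take any enumeration of $\cE$ and set $|e_n|=2^{-n}$). Without it, a $\varrho_0$-ball of radius $r$ can contain vastly more vertices than the word-metric ball of radius $\approx r$, so your volume comparison fails, and with it the Faber--Krahn/Nash transfer of the exponent $N$. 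Your escape hatch ``or can be arranged'' is not available either: changing edge lengths changes the operator $\bH_D$ whose semigroup is under discussion.

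The route the paper points to bypasses this. Section~\ref{sec:connection} records that under the finite intrinsic size condition~\eqref{eq:finsize} alone --- which for $\mu=\nu\equiv\id$ is exactly $\sup_e|e|<\infty$ --- the Kirchhoff Laplacian $\bH_D$ and the associated discrete Laplacian $\rh_D$ with weights \eqref{eq:Bbndry}--\eqref{eq:Mbndry} share ultracontractivity estimates (see the bullet list and the references \cite{roso10}, \cite[\S~5.2]{EKMN}, \cite[Chap.~4.8]{kn21}). One then obtains the on-diagonal decay on the \emph{discrete} side from Varopoulos' theory for random walks on groups (polynomial growth of degree $N\ge 3$ via Gromov/Bass--Guivarc'h for~(i), superpolynomial growth for~(ii)) and transfers it back through the operator-theoretic correspondence. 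That correspondence is established via the boundary-triplet machinery rather than by quasi-isometric comparison of heat kernels, which is precisely why no lower bound on $|e|$ is needed --- in line with Remark~\ref{rem:RoughIso}'s caution that quasi-isometry alone does not carry such estimates without extra local hypotheses.
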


\begin{remark}
Notice that applying Theorem~1.2 and Theorem~1.3 from \cite{ls97} to the Dirichlet Laplacian $\bH_D$ and then using Theorem~\ref{th:UltraGroup}, we arrive at the Cwikel--Lieb--Rozenblum estimates for additive perturbation, that is, for Schr\"odinger operators  $-\Delta +V(x)$. It is also well known (see \cite{fls}) that ultracontractivity estimates and Sobolev-type inequalities lead to Lieb--Thirring bounds ($\mathfrak{S}_p$ estimates on the negative spectra), however, we are not going to pursue this goal here.
For further details and historical remarks we refer to \cite[Chap.~8.2]{kn21}.
\end{remark}


\noindent


\begin{thebibliography}{XXX}
\bibitem{baru10}
M.\ Baker and R.\ Rumely, \emph{Potential Theory and Dynamics on the Berkovich Projective Line}, Math.\ Surv.\ Monographs, {\bf 159}. Amer.\ Math.\ Soc., Providence, RI, 2010.

\bibitem{bar}
M.\ T.\ Barlow, {\em Random Walks and Heat Kernels on Graphs}, London Math.\ Soc.\ Lect.\ Note Ser.\ {\bf 438}, Cambridge Univ.\ Press, Cambridge, 2017.

\bibitem{baba03}
M.\ T.\ Barlow and R.\ F.\ Bass, {\em  Stability of parabolic Harnack inequalities}, Trans.\ Amer.\ Math.\ Soc.\ {\bf 356}, no. 4, 1501--1533  (2004).

\bibitem{bk13}
G.\ Berkolaiko and P.\ Kuchment, {\em Introduction to Quantum Graphs}, Math.\ Surv.\ Monogr.\ {\bf 186},
Amer.\ Math.\ Soc., Providence, RI, 2013.

\bibitem{bgp07}
J.\ Br\"uning, V.\ Geyler, and K.\ Pankrashkin, {\em Spectra of self-adjoint extensions and applications to solvable Schr\"odinger operators}, Rev.\ Math.\ Phys.\ {\bf 20}, 1--70  (2008).

\bibitem{bbi}
D.\ Burago, Yu.\ Burago, and S.\ Ivanov, {\em  A Course in Metric Geometry}, Graduate Stud.\ in Math.\ {\bf 33}, 
Amer.\ Math.\ Soc., Providence, RI, 2001.

\bibitem{cat}
C.\ Cattaneo, {\em The spectrum of the continuous Laplacian on a graph}, Monatsh.\ Math.\ {\bf 124}, no.~3, 215--235  (1997).

\bibitem{che}
P.\ R.\ Chernoff, {\em Essential self-adjointness of powers of generators of hyperbolic equations}, J.\ Funct.\ Anal.\ {\bf 12}, 401--414 (1973).

\bibitem{chung}
F.\ Chung, {\em Spectral Graph Theory}, CBMS Regional Conference Series in Mathematics {\bf 92}, Providence, RI, 1997.

\bibitem{cdv}
Y.\ Colin de Verdi\'ere, {\em Spectres de Graphes}, Soc.\ Math.\ de France, Paris, 1998.

\bibitem{dVTHT}
Y.\ Colin de Verdi\'ere, N.\ Torki-Hamza, and F.\ Truc, {\em Essential self-adjointness for combinatorial Schr\"odinger operators II -- Metrically non complete graphs},  Math.\ Phys.\ Anal.\ Geom.\ {\bf 14}, no.~1, 21--38  (2011).

\bibitem{dav}
E.\ B.\ Davies, {\em Large deviations for heat kernels on graphs}, J.\ London Math.\ Soc.\ {\bf 47}, 65--72 (1993).

\bibitem{dav93b}
E.\ B.\ Davies, {\em Analysis on graphs and noncommutative geometry}, J.\ Funct.\ Anal.\ {\bf 111}, 398--430 (1993).

\bibitem{DM91}
 V.\ A.\ Derkach and M.\ M.\ Malamud, {\em Generalized resolvents and the boundary
value problems for Hermitian operators with gaps}, J.\ Funct.\ Anal.\ {\bf 95}, 1--95 (1991).

\bibitem{DM17}
V.\ A.\ Derkach and M.\ M.\ Malamud, {\em The Theory of Extensions of Symmetric Operators and Boundary Value Problems}, 
Proc.~Inst.~Math.~NAS of Ukraine,{\bf 104}, Kiev, 2017. (in Russian)

\bibitem{die}
R.\ Diestel, \emph{Graph Theory}, 5th edn., Grad.\ Texts in Math. {\bf 173}, Springer-Verlag, Heidelberg, New York, 2017.

\bibitem{drva}
J.\ Draisma and A.\ Vargas, {\em On the gonality of metric graphs}, Notices Amer.\ Math.\ Soc.\ {\bf 66}, no.~5, 687--695 (2021).

\bibitem{ex97}
P.\ Exner, {\em A duality between Schr\"odinger operators on graphs and
certain Jacobi matrices}, Ann.\ Inst.\ H.\ Poincar\'e {\bf 66}, 359--371  (1997).

\bibitem{ekkst08}
P.\ Exner, J.\ P.\ Keating, P.\ Kuchment, T.\ Sunada, and A.\ Teplyaev, {\em
Analysis on Graphs and Its Applications}, Proc.\ Symp.\ Pure
Math.\ {\bf 77}, Amer.\ Math.\ Soc., 2008.

\bibitem{EKMN}
P.\ Exner, A.\ Kostenko, M.\ Malamud, and H.\ Neidhardt, \emph{Spectral theory of infinite quantum graphs}, Ann.\ Henri Poincar\'e {\bf 19}, no.~11, 3457--3510 (2018).

\bibitem{exko}
P.\ Exner and  H.\ Kova\v{r}\'ik, {\em Quantum Waveguides}, Theor.\ and Math.\ Phys., Springer, Cham, 2015.

\bibitem{fo13}
M.\ Folz, {\em Volume growth and stochastic completeness of graphs}, Trans.\ Amer.\ Math.\ Soc.\ {\bf 366}, no.~4, 2089--2119 (2014).

\bibitem{fo}
M.\ Folz, {\em Volume growth and spectrum for general graph Laplacians}, Math.\ Z.\ {\bf 276}, 115--131, (2014).

\bibitem{flw14}
R.\ L.\ Frank, D.\ Lenz and D.\ Wingert, {\em Intrinsic metrics for non-local symmetric Dirichlet forms and applications to spectral theory}, J.\ Funct.\ Anal.\ {\bf 266}, 4765--4808 (2014).

\bibitem{fls}
R.\ Frank, E.\ Lieb and R.\ Seiringer, {\em Equivalence of Sobolev inequalities and Lieb-Thirring inequalities}, XVIth Intern.\ Congress on Math.\ Physics, 523--535, World Sci.\ Publ., Hackensack, NJ, 2010.
 
\bibitem{fuk10}
M.\ Fukushima, Y.\ Oshima and M.\ Takeda, {\em Dirichlet Forms and Symmetric Markov Processes}, 2nd edn., De Gruyter, 2010.
 
\bibitem{gaf55}
M.\ P.\ Gaffney, {\em Hilbert space methods in the theory of harmonic integrals}, Trans.\ Amer.\ Math.\ Soc.\ {\bf 78}, 426--444 (1955).

\bibitem{gri99}
A.\ Grigor'yan, {\em Analytic and geometric background of recurrence and non-explosion of the Brownian motion on Riemannian manifolds}, Bull.\ Amer.\ Math.\ Soc.\ {\bf 36}, 135--249  (1999).

\bibitem{ghm}
A.\ Grigor'yan, X.\ Huang, and J.\ Masamune, {\em On stochastic completeness of jump processes}, Math.\ Z.\ {\bf 271}, no.~3-4, 1211--1239 (2012).

\bibitem{gks15}
B.\ G\"uneysu, M.\ Keller, and M.\ Schmidt, {\em A Feynman--Kac--Ito formula for magnetic Schr\"odinger operators on graphs}, Probab.\ Theory Relat.\ Fields {\bf 165}, 365--399 (2016).

\bibitem{hae}
S.\ Haeseler, \emph{Analysis of Dirichlet forms on graphs}, PhD thesis, Jena, 2014; \arxiv{1705.06322} (2017).

\bibitem{huke14}
B.\ Hua and M.\ Keller,  {\em Harmonic functions of general graph Laplacians}, Calc.\ Var.\ {\bf 51}, 343--362 (2014).

\bibitem{hkmw13}
X.\ Huang, M.\ Keller, J.\ Masamune, and R.\ Wojciechowski, \emph{A note on self-adjoint extensions of the Laplacian on weighted graphs}, J.\ Funct.\ Anal.\ {\bf 265}, 1556--1578 (2013).

\bibitem{hua}
X.\ Huang, {\em A note on the volume growth criterion for stochastic completeness of weighted graphs}, Potential Anal.\ {\bf 40}, 117--142 (2014).

\bibitem{hks}
X.\ Huang, M.\  Keller, and M.\ Schmidt, {\em On the uniqueness class, stochastic completeness and volume growth for graphs}, Trans.\ Amer.\ Math.\ Soc.\ {\bf 373}, 8861--8884  (2020). 

\bibitem{hush14}
X.\ Huang and Y.\ Shiozawa,  {\em Upper escape rate of Markov chains on weighted graphs}, Stoch.\ Proc.\ Their Appl.\ {\bf 124}, 317--347 (2014). 

\bibitem{jor}
P.~E.~T.\ Jorgensen, {\em Essential selfadjointness of the graph-Laplacian}, J.\ Math.\ Phys.\ {\bf 49}, Art.~ID~073510 (2008).

\bibitem{kato}
T.\ Kato, \emph{Perturbation Theory for Linear Operators}, 2nd ed., Springer-Verlag, Berlin-New York, 1976.

\bibitem{kel15}
M.\ Keller, {\em Intrinsic metric on graphs: a survey}, in: D.\ Mugnolo (ed.), ``Mathematical Technology of Networks", 81--119, Springer Proc.\ Math.\ Stat. {\bf 128}, Springer, Cham, 2015.

\bibitem{kl10}
M.\ Keller and D.\ Lenz, {\em Unbounded Laplacians on graphs:
basic spectral properties and the heat equation}, Math.\ Model.\ Nat.\ Phenom.\
{\bf 5}, no.~2, 198--224 (2010).

\bibitem{kl12}
M.\ Keller and D.\ Lenz, {\em Dirichlet forms and stochastic completeness of graphs and subgraphs}, J.\ reine Angew.\ Math.\ {\bf 666}, 189--223 (2012).

\bibitem{klw}
M.\ Keller, D.\ Lenz, and R.\ K.\ Wojciechowski, {\em Volume growth, spectrum and stochastic completeness of infinite graphs}, Math.\ Z.\ {\bf 274}, no.~3-4, 905--932 (2013).

\bibitem{klwBook}
M.\ Keller, D.\ Lenz, and R.\ K.\ Wojciechowski, {\em Graphs and Discrete Dirichlet Spaces}, Grund.\  Math.\ Wissenschaften {\bf 358}, Springer,  2021.

\bibitem{KM10}
A.\ Kostenko and M.\ Malamud,  {\em 1--D Schr\"odinger operators with local point interactions
on a discrete set}, J.\ Differential Equations {\bf 249}, 253--304 (2010).

\bibitem{kmn21}
A.\ Kostenko, M.\ Malamud, and N.\ Nicolussi, \emph{A Glazman--Povzner--Wienholtz theorem on graphs}, submitted, \arxiv{2105.09931} (2021).

\bibitem{kmn19}
A.\ Kostenko, D.\ Mugnolo, and N.\ Nicolussi, \emph{Self-adjoint and Markovian extensions of infinite quantum graphs}, J.\ London Math.\ Soc., to appear; \arxiv{1911.04735} (2019).

\bibitem{kn19}
A.\ Kostenko and N.\ Nicolussi, \emph{Spectral estimates for infinite quantum graphs}, Calc.\ Var.\ 
 {\bf 58}, no.~1, Art.~15 (2019).

\bibitem{kn20}
A.\ Kostenko and N.\ Nicolussi, \emph{A note on the Gaffney Laplacian on infinite metric graphs}, J.\ Funct.\ Anal.\ {\bf 280}, no.~10, Art: 109216 (2021). 

\bibitem{kn21}
A.\ Kostenko and N.\ Nicolussi, \emph{Laplacians on Infinite Graphs}, monograph, 2021. 

\bibitem{ls97}
D.\ Levin and M. Solomyak, {\em The Rozenblum--Lieb--Cwikel inequality for Markov generators}, J.\ d'Anal.\ Math.\ {\bf 71}, 173--193 (1997).

\bibitem{lyons}
T.\ Lyons, {\em Instability of the Liouville property for quasi-isometric Riemannian manifolds and reversible Markov chains}, J.\ Diff.\ Geom.\ {\bf 26}, 33--66 (1987).

\bibitem{maue11}
J.\ Masamune and T.\ Uemura, {\em Conservation property of symmetric jump processes}, Ann.\ Inst.\ Henri Poincar\'e, Probab.\ Statist.\ {\bf 47}, 650--662  (2011).

\bibitem{ni85}
S.\ Nicaise, {\em Some results on spectral theory over networks, applied to nerve impulse
transmission}, Lect. Notes in Math.\ {\bf 1171}, pp. 532--541 (1985).

\bibitem{nowak}
P.\ W.\ Nowak and G.\ Yu, {\em Large Scale Geometry}, EMS Textbooks in Math., Eur.\ Math.\ Soc., Z\"urich, 2012.

\bibitem{pan12}
K.\ Pankrashkin, {\em  Unitary dimension reduction for a class of self-adjoint extensions with applications to graph-like structures}, J.\ Math.\ Anal.\ Appl.\ {\bf 396}, 640--655  (2012).

\bibitem{post}
O.\ Post, {\em Spectral Analysis on Graph-Like Spaces}, Lect.\ Notes in Math.\ {\bf 2039}, Springer-Verlag, Berlin, Heidelberg, 2012

\bibitem{RSI}
M.\ Reed and B.\ Simon, {\em Methods of Modern Mathematical Physics, I: Functional Analysis}, Revised and enlarged edn., Acad. Press, Inc., 1980.

\bibitem{RSII}
M.\ Reed and B.\ Simon, {\em Methods of Modern Mathematical Physics, II: Fourier Analysis, Self-adjointness}, Acad. Press, Inc., 1975.

\bibitem{roe}
W.\ Roelcke, {\em Uber den Laplace-Operator auf Riemannschen Mannigfaltigkeiten mit diskontinuierlichen Gruppen}, Math.\ Nachr.\ {\bf 21}, 132--149 (1960).

\bibitem{roso10}
G.\ Rozenblum and M.\ Solomyak, {\em On spectral estimates for Schr\"odinger-type operators:
the case of small local dimension}, Funct.\ Anal.\ Appl.\ {\bf 44}, no.~4, 259--269 (2010).

\bibitem{schm20}
M.\ Schmidt, {\em On the existence and uniqueness of  self-adjoint realizations of discrete (magnetic) Schr\"odinger operators}, in:``Analysis and Geometry on Graphs and Manifolds",  M.\ Keller et.\ al.\ (eds.), pp.~250--327, London Math.\ Soc.\ Lecture Note Ser.\ {\bf 461}, Cambridge Univ.\ Press,  Cambridge, 2020.

\bibitem{schm}
K.\ Schm\"udgen, {\em Unbounded Self-Adjoint Operators on Hilbert Space}, Graduate Texts in Math.\ {\bf 265}, Springer, 2012.

\bibitem{shu92}
M.\ A.\ Shubin, {\em Spectral theory of elliptic operators on non-compact manifolds}, Ast\'erisque {\bf 207}, 35--108 (1992).

\bibitem{str}
R.\ S.\ Strichartz, {\em Analysis of the Laplacian on the complete Riemannian manifold}, J.\ Funct.\ Anal.\ {\bf 52}, 48--79 (1983).

\bibitem{stu}
K.-T.\ Sturm, {\em Analysis on local Dirichlet spaces I. Recurrence, conservativeness and $L^p$-Liouville properties}, J.\ reine angew.\ Math.\ {\bf 456}, 173--196 (1994). 

\bibitem{susy}
P.\ W.\ Sy and T.\ Sunada, {\em Discrete Schr\"odinger operators on a graph}, Nagoya Math.\ J.\ {\bf 125}, 141--150 (1992).

\bibitem{var85b}
N.\ Th.\ Varopoulos, {\em Long range estimates for Markov chains}, Bull.\ Sci.\ Math.\ {\bf 109},
225--252 (1985).

\bibitem{var}
N.\ Th.\ Varopoulos, L.\ Saloff-Coste, and T.\ Coulhon, {\em Analysis and Geometry on Groups},
Cambridge Univ.\ Press, Cambridge, 1992.

\bibitem{vB}
J.\ von Below, {\em A characteristic equation associated to an eigenvalue problem on $c^2$-networks}, Linear Alg.\ Appl.\ {\bf 71}, 309--325  (1985).

\bibitem{woe}
W.\ Woess, \emph{Random Walks on Infinite Graphs and Groups}, Cambridge Univ.\ Press, Cambridge, 2000.

\bibitem{woj11}
R.\ K.\ Wojciechowski, {\em Stochastically incomplete manifolds and graphs}, in: ``Random Walks, Boundaries and Spectra", D.\ Lenz et. al. (Eds.), 163--179, Progr.\ Probab.\ {\bf 64}, Birkh\"auser, Springer Basel AG, Basel, 2011.

\end{thebibliography}
\end{document}